\newtheorem{theorem}{\large Theorem}[section]
\newtheorem{lemma}{\large Lemma}[section]
\newtheorem{definition}{\large Definition}[section]
\newtheorem{remark}{\large Remark}[section]
\newtheorem{example}{\large Example}[section]
\newtheorem{proposition}{\large Proposition}[section]
\def\1{\rule{0pt}{1.7ex}xy}
\def\2{\rule{0pt}{1.7ex}x_0x}
\def\3{\rule{0pt}{2ex}X_x}
\def\4{\rule{0pt}{1.7ex}1}
\def\5{\rule{0pt}{1.7ex}2}
\begin{document}

	\thispagestyle{empty}
	
	\title{The Karush-Kuhn-Tucker Optimality Conditions For Multi-Objective Interval-Valued Optimization Problem On Hadamard Manifolds}
	
	\author[a]{Hilal Ahmad Bhat}
	%\ead{bhathilal01@gmail.com}

	\author[b,*]{Akhlad Iqbal}%\corref{cor1}}
%\ead{akhlad6star@gmail.com}

     \author[c]{I. Ahmad}

\affil[a,b]{\it \small Department of Mathematics, Aligarh Muslim University, Aligarh, 202002, Uttar Pradesh, India}

\affil[c]{\it \small Department of Mathematics, King Fahd University of Petroleum \& Minerals, Dhahran-31261, Saudi Arabia}
\date{}

\maketitle

\let\thefootnote\relax\footnotetext{*corresponding author (Akhlad Iqbal)\\ Email addresses: bhathilal01@gmail.com (Hilal Ahmad Bhat),\\ akhlad6star@gmail.com (Akhlad Iqbal),\\
drizhar@kfupm.edu.sa (I. Ahmad).}

%\footnotetext[1]{corresponding author (Akhlad Iqbal)\\ Email %addresses: bhathilal01@gmail.com (Hilal Ahmad Bhat),\\ %akhlad6star@gmail.com (Akhlad Iqbal)}

\begin{abstract}
	The KKT optimality conditions for multi-objective interval-valued optimization problem on Hadamard manifold are studied in this paper. Several concepts of Pareto optimal solutions, considered under LU and CW ordering on the class of all closed intervals in $\mathbb{R}$, are given. The KKT conditions are presented under the notions of convexity, pseudo-convexity and generalized Hukuhara difference. We show, with the help of an example, that the results done in this paper for solving multi-objective interval-valued optimization problems on Hadamard spaces are more general than the existing ones on Euclidean spaces.  The main results are supported by examples.\\
	
	\noindent {\bf Keywords:} Multiple objective programming, Convex programming, Interval-valued function, Generalized Hukuhara difference, KKT optimality conditions, Hadamard manifold.
	
\end{abstract}

\section{Introduction}
The uncertainty and lack of accuracy in the day to day human life is inevitable and to tackle such problems, many mathematicians are led to focus on different interesting areas of research such as stochastic optimization programming (SOP), fuzzy optimization programming (FOP) and interval-valued optimization programming (IVOP). Stochastic optimization programming problems include the use of random variables of known probability distribution as the coefficients of the functions involved whereas fuzzy optimization programming problems include the fuzzy numbers with known membership functions for the same. The techniques developed to solve the stochastic and fuzzy optimization programming problems are very subjective and it is difficult to find a proper membership function or a probability distribution due to lack of sufficient data. Moreover, it sometimes becomes complex to relate both of these programming methods to real life problems. So, as an alternative, IVOP provides a better and much easier solution to such uncertain optimization problems which uses a closed interval to illustrate the uncertainty of a variable. In addition, the coefficients involved in the functions of an IVOP are taken as the closed intervals in $\mathbb{R}$.

Many mathematicians have so far explored various methods to solve the IVOP problems. An overview on the application of interval arithmetics was given by Alefeld  and Mayer \cite{alefeld'}. Ishibuchi and Tanaka \cite{ishibuchi} introduced the ordering relation between two closed intervals which involves the center and half width of the closed intervals and derived the solution concepts for the multi-objective IVOP problems. Wu \cite{Wu} provided two solution concepts for an IVOP problem by taking into consideration the two types of partial ordering (LU and CW ordering) on the set of all closed intervals in $\mathbb{R}$,  and later, Wu \cite{Wu1} extended these solution concepts for multi-objective IVOP to type-I (with respect to LU ordering) Pareto optimal solution (POS) and type-II POS (with respect to CW ordering) and derived the KKT optimality conditions for the IVOP. Although, some latest work on KKT-type optimality conditions for IVOP can be seen in \cite{FR, LS}

Further, several researchers lay focus on the extension of the methods and techniques of non-linear analysis in Euclidean spaces to Riemannian manifolds (see \cite{bento', bento, ferreira, li, nemeth, rapcsak, rapcsak', gabriel, udriste, wang}). Such extensions especially in optimization programming from Euclidean spaces to Riemannian manifolds has its own benefits such as  certain functions involved in optimization programming fail to be convex in linear space but the same functions turnout to be convex when introduced on Riemannian manifold under a suitable Riemannian metric \cite{rapcsak, rapcsak', chen}. Moreover, a non-monotone vector field  transforms into a monotone vector field when introduced on a Riemannian manifold with a proper Riemannian metric \cite{ferreira, rapcsak, rapcsak'}. Udriste \cite{udriste} and Rapcsak \cite{rapcsak'} are the first authors who considered the concept of generalized convexity known as geodesic convexity. Chen \cite{chen} extended the concepts of convexity and pseudo-convexity of interval-valued functions and derived the KKT optimality conditions for IVOP on Hadamard manifolds.

Motivated and inspired by above work, we lay our attention on the multi-objective optimization programming problems on Hadamard manifolds in which objective functions are interval-valued and the constraint functions are real valued. We derive the KKT optimality conditions in multi-objective optimization program using the notions of convexity, pseudo-convexity and generalized Hukuhara difference. The examples are provided in support of the main results. Moreover, with the help of a numerical example, we show that results presented in this paper for solving MIVOP on Hadamard manifold are more general than the existing ones on Euclidean spaces (see \cite{ycc}, \cite{eh}, \cite{Wu}, \cite{Wu1}, \cite{jz}). The comparison between the proposed KKT conditions and the existing ones is given in the Section 5. In Section 6, we discuss the relationship between fuzzy set theory and interval analysis.

\section{Preliminaries}
Let $\mathbb{R}^n$ be the $n$-dimensional Euclidean space, then, for any two vectors $p = (p_1, p_2,...,p_n)$ and $q = (q_1, q_2,..., q_n)$, we use the following convention for ordering relations:
\begin{enumerate}[label=(\roman*)]
	\item $p = q \Longleftrightarrow p_i =q_i, ~ i \in \{ 1,2,...,n\}$;
	\item $p > q \Longleftrightarrow p_i > q_i, ~ i \in \{ 1,2,...,n\}$;
	\item $p \geqq q \Longleftrightarrow p_i \geqq q_i,~  i \in \{ 1,2,...,n\}$;
	\item $p \geq q \Longleftrightarrow p \geqq q ~ \text{and}~ p \neq q. $
\end{enumerate}

\begin{definition} \rm \cite{udriste}
	A Hadamard manifold $H$ is a complete simply connected Riemannian manifold of non-positive sectional curvature.
\end{definition}	

\begin{proposition}
	{\rm \cite{udriste}} Let $H$ be a Hadamard manifold and $p \in H.$ Then $exp_p: T_p (H) \rightarrow H$ is a diffeomorphism and for any $p,q \in H,$ there exists a unique normalized geodesic $\gamma_{p,q} = exp_p(\alpha~exp^{-1}_p q)$, for all $\alpha \in [0,1]$, joining p to q.
\end{proposition}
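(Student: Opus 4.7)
The plan is to realize this statement as the classical Cartan--Hadamard theorem applied to $H$. Since $H$ is complete, Hopf--Rinow guarantees that $\exp_p$ is defined on all of $T_pH$. The diffeomorphism claim then splits into two steps: first, use non-positive curvature to show $\exp_p$ is a \emph{local} diffeomorphism everywhere; second, use simple connectedness to promote this to a \emph{global} diffeomorphism. The normalized geodesic assertion will follow at once.

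For the local step, the target is to rule out conjugate points along any geodesic emanating from $p$, since that is precisely what kills $\ker d\exp_p$. Let $\gamma$ be a unit-speed geodesic with $\gamma(0)=p$ and let $J$ be a Jacobi field along $\gamma$ with $J(0)=0$ and $J'(0)\neq 0$. Using the Jacobi equation $J''+R(J,\dot\gamma)\dot\gamma=0$ I would compute
\[
\frac{d^2}{dt^2}\|J(t)\|^2 \;=\; 2\|J'(t)\|^2 - 2\langle R(J,\dot\gamma)\dot\gamma,J\rangle.
\]
Non-positive sectional curvature makes the curvature term non-positive, so $\|J\|^2$ is convex in $t$; since $\|J(0)\|^2=0$ and $(d^2/dt^2)\|J\|^2|_{t=0}=2\|J'(0)\|^2>0$, convexity forces $\|J(t)\|^2>0$ for all $t>0$. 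Hence no conjugate points can occur and $\exp_p$ is a local diffeomorphism on all of $T_pH$.

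To globalize, I would pull the Riemannian metric on $H$ back through $\exp_p$, turning $\exp_p$ into a local isometry from $T_pH$ (with the pulled-back metric) onto $H$. Straight rays through $0\in T_pH$ remain geodesics under the new metric, since they map to radial geodesics in $H$ that, by completeness, are defined for all time; this makes the pulled-back metric geodesically complete. The standard theorem that a local isometry from a complete connected Riemannian manifold to a connected Riemannian manifold is a covering map then shows $\exp_p$ is a covering. Simple connectedness of $H$ forces this covering to have a single sheet, so $\exp_p$ is a diffeomorphism.

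The geodesic statement is now immediate. For any $q\in H$, set $v=\exp_p^{-1}(q)\in T_pH$ and define $\gamma_{p,q}(\alpha)=\exp_p(\alpha v)$ on $[0,1]$; this is a normalized geodesic from $p$ to $q$. Any competing geodesic $\sigma$ from $p$ to $q$ is of the form $\sigma(\alpha)=\exp_p(\alpha w)$ with $\exp_p(w)=q$, and injectivity of $\exp_p$ gives $w=v$, hence $\sigma=\gamma_{p,q}$. The main technical obstacle is the passage from local to global diffeomorphism: verifying completeness of the pulled-back metric on $T_pH$ and invoking the local-isometry-to-covering theorem carry the heaviest weight, although both steps are standard once the Jacobi-field convexity has ruled out conjugate points.
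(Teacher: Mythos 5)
Your argument is a correct and standard proof of the Cartan--Hadamard theorem: the Jacobi-field convexity computation correctly rules out conjugate points, and the passage from local diffeomorphism to global diffeomorphism via the pulled-back metric, completeness along radial geodesics, and the local-isometry-to-covering theorem is the classical route. Note that the paper itself offers no proof of this proposition; it is quoted as background from Udriste's book, so there is nothing to compare against beyond confirming that your reconstruction is sound, which it is.
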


\begin{definition}
	{\rm \cite{udriste}} \rm Let $H$ be a Hadamard manifold. A subset $E \subseteq H$ is said to be geodesic convex if, for all $p$, $q$ $\in$ $E$, the geodesic joining $p$ to $q$ is contained in $E$; i.e., if $\gamma : [0,1] \rightarrow H$ is a geodesic such that $p =\gamma(0)$ and $q=\gamma (1)$, then $\gamma_{p,q}(\alpha) = exp_p(\alpha ~ exp_p^{-1}q) \in E$, for all $\alpha \in [0,1]$.
\end{definition}

\begin{definition} \rm \cite{li} Let $H$ be a Hadamard manifold, $E \subseteq H$ be a nonempty open set, and let $\phi:E \rightarrow \mathbb{R}$ be a function. We say that $\phi$ is directionally differentiable at a point $p \in E$ in the direction of $w \in T_p(H)$ if the limit 
	$$\phi'(p;w) = \lim\limits_{\alpha \rightarrow 0^+} \frac{\phi(exp_p ~ \alpha w)-\phi(p)}{\alpha}$$
	exists, where $\phi'(p;w)$ is called the directional derivative of $\phi$ at $p$ in the direction of $w \in T_p(H).$  If $\phi$ is directionally differentiable at $p$ in every direction $w \in T_p(H)$, we say $\phi$ is directionally differentiable at $p$.
\end{definition}

\begin{definition}
	\rm \cite{udriste} Let $H$ be a Hadamard manifold and $E\subseteq H$ be a nonempty open geodesic convex set. A function $\phi: E \rightarrow \mathbb{R}$ is said to be convex at $p \in E$, if for any $q \in E$,
	$$\phi(\gamma(\alpha))\leqq \alpha \phi(q) +(1-\alpha)\phi(p), ~~~ \text{for all} ~ \alpha \in[0,1].$$
	Where $\gamma : [0,1] \rightarrow E$ is a geodesic with $\gamma(0) = p$ and $\gamma(1) = q$. If the above inequality is strict for $p\neq q$, then we say that $\phi$ is a strict convex function at $p$. Moreover, if the above inequality holds, for all $p,q \in E$, then we say that $\phi$ is a convex function in $E$.  
\end{definition}

\begin{remark}\label{remark2.1,p1}
	\rm (i) if $\phi$ is differentiable at $p$, then, 
	$$\phi'(p,w) = ~<\nabla \phi(x), ~w>,$$
	where $\nabla \phi(p)$ denotes the gradient of $\phi$ at $p$ and $w \in T_p(H)$.\\
	\\
	(ii)  A function $\phi$ is a convex function on $E$ if and only if, for all $p,q \in E$, $\phi(q) -\phi(p) \geqq \phi'(p, exp^{-1}_p q).$
\end{remark}

\noindent Now, we recall the arithmetics of intervals:

Let $\mathbb{I}$ be the set of all closed and bounded intervals in $\mathbb{R}$, let $T \in \mathbb{I}$, we write $T=[t^L,t^U]$, where $t^L$ and $t^U$ are respective lower and upper bounds of $T$. Let $T = [t^L, t^U],~ S= [s^L, s^U] \in \mathbb{I}$ and $k \in \mathbb{R}$, we have  
\begin{align*}	
	T + S &= \{t+s: t\in T \text{~and ~} s \in S\} = [t^L+s^L, t^U+s^U],\\
	kT &= \{kt: t \in T\} = \left \{ \begin{array}{cc}
		[kt^L,kt^U], & k \geqq 0,\\
		|k|[-t^U, -t^L], & k<0.
	\end{array} \right.
\end{align*}
Therefore, $-T = \{-t: t \in T\} = [-t^U,-t^L],$ and $T-S = T+(-S) = [t^L-s^U, t^U-s^L].$ The real number $\alpha \in \mathbb{R}$ can be regarded as a closed interval $T_{\alpha} = [\alpha,\alpha]$. The Hausdorff metric between intervals $T$ and $S$  is defined as $d_H(T,S) = \text{max}\{|t^L-s^L|, |t^U-s^U|\}.$ For more details on this, we refer to Wu \cite{Wu}, Alefeld and Herzberger \cite{alefeld} and Moore \cite{moorem, moorei}.

Let $H$ be a Hadamard manifold. The function $\phi:H\rightarrow \mathbb{I}$ is called an interval-valued function, and denoted as $\phi(p) = [\phi^L(p), \phi^U(p)],$ where $\phi^L$ and $\phi^U$ are real valued functions and satisfy $\phi^L(p) \leqq \phi^U(p)$ for every $p \in H$.  \\

{\bf \noindent Solution Concepts:}\\
\noindent For ordering relation between intervals in $\mathbb{I}$, we use the following convention:\\
Let $T=[t^L,t^U]$, $S = [s^L,s^U]$ $\in$ $\mathbb{I}$, we write
\begin{enumerate}[label=(\roman*)]
	\item $T \leqq_{LU} S \Longleftrightarrow t^L \leqq s^L \text{~and~} t^U \leqq s^U.$
	\item $T <_{LU} S \Longleftrightarrow T \leqq_{LU} S \text{~and~} T \neq S.$
\end{enumerate}
Equivalently, $T <_{LU} S$ if and only if
\begin{equation}\label{equation1,p1}
	\begin{array}{ccc}
	\left \{ \begin{array}{c}
			t^L < s^L \\ t^U \leqq s^U
		\end{array} \right.
		&
		\text{or~} \left \{ \begin{array}{c}
			t^L \leqq s^L \\ t^U < s^U
		\end{array} \right.
		&
		\text{or ~} \left \{ \begin{array}{c}
			t^L < s^L \\ t^U < s^U.
		\end{array} \right.
	\end{array}
\end{equation}

\noindent Ishibuchi and Tanaka \cite{ishibuchi} gave another ordering relation between the intervals $T$ and $S$ with lower center and lower half width (i.e., the less uncertainty), which is preferred for minimization problem.

\noindent Let $T=[t^L,t^U]$ $\in \mathbb{I}$, the center of $T$ is $t^C = \frac{1}{2}(t^L + t^U)$ and the half-width of $T$ is $t^W = \frac{1}{2}(t^U-t^L).$ Let $T = ~<t^C, t^W>$ and $S =~ <s^C, s^W>.$ then
\begin{enumerate}[label=(\roman*)]
	\item $T \leqq_{CW} S \Longleftrightarrow t^C \leqq s^C \text{~and~} t^W \leqq s^W.$
	\item $T <_{CW} S \Longleftrightarrow T\leqq_{CW} S \text{~and~} T \neq S.$
\end{enumerate}

\noindent We now consider the following multi-objective IVOP problem:
$$~\text{(MIVOP1)} ~~~~~~~~~ \text{Minimize}~~~ ~~~~~~ \phi(p) = (\phi_1(p), \phi_2(p),.....,\phi_l(p)),~~~~~~~~~~~~~~~~$$
$$~~~~~~~~~~ \text{subject to}, ~~~~~~ p \in F\subseteq E(\subseteq H),~~~~~~~~~~~~~~~~~~$$
where $\phi_s(p) : E \rightarrow \mathbb{I}$, $s \in \{1,2,...,l\},$ are interval-valued functions defined on a subset $E$ of Hadamard manifold $H$ and $F$ is a feasible set, assumed to be geodesic convex subset of $E$.

We say that $T = (T_1, T_2,...,T_r)$ is an interval-valued n-tuple if each component $T_k =[t_k^L, t_k^U]$ is a closed interval for $k \in \{1,2,3,...,r\}.$ Let $T = (T_1, T_2,..., T_r)$ and $S = (S_1, S_2,...,S_r)$ be two interval-valued n-tuples. We write $T\leqq_{LU} S$ if and only if $T_k \leqq_{LU} S_k$ for each $k \in \{1,2,3,...,r\}.$ and $T <_{LU} S$ if and only if $T_k \leqq_{LU} S_k$ for each $k \in \{1,2,3,...,r\}$ and $T_h <_{LU} S_h$ for at least one index $h$. Let $\bar{p}$ be a feasible solution of problem (MIVOP1), then $\phi(\bar{p})$ is an inter-valued n-tuple.

\begin{definition}\label{definition2.5,p1}
	\rm \cite{Wu1} If $\bar{p}$ is a feasible solution of problem (MIVOP1), then
	\begin{enumerate}[label=(\roman*)]
		\item $\bar{p}$ is a {\it type-I Pareto optimal solution (POS)} of problem (MIVOP1) if $\phi(\hat{p}) <_{LU} \phi(\bar{p})$ holds for no such $\hat{p} \in F$.
		\item $\bar{p}$ is a {\it strongly type-I POS} of problem (MIVOP1) if $\phi(\hat{p}) \leqq_{LU} \phi(\bar{p})$ holds for no such $\hat{p} \in F$.
		\item $\bar{p}$ is a {\it weakly type-I POS} of problem (MIVOP1) if $\phi_s(\hat{p}) <_{LU} \phi_s(\bar{p})$, $s \in \{1,2,...,l\},$ holds for no such $\hat{p} \in F$.
	\end{enumerate}
\end{definition}

\begin{definition}\label{definition2.6,p1}
	\rm \cite{Wu1} If $\bar{p}$ is a feasible solution of problem (MIVOP1), then
	\begin{enumerate}[label=(\roman*)]
		\item $\bar{p}$ is a {\it type-II POS} of problem (MIVOP1) if $\phi(\hat{p}) <_{CW} \phi(\bar{p})$ holds for no such $\hat{p} \in F$.
		\item $\bar{p}$ is a {\it strongly type-II POS} of problem (MIVOP1) if $\phi(\hat{p}) \leqq_{CW} \phi(\bar{p})$ holds for no such $\hat{p} \in F$.
		\item $\bar{p}$ is a {\it weakly type-II POS} of problem (MIVOP1) if $\phi_s(\hat{p}) <_{CW} \phi_s(\bar{p})$, $s \in \{1,2,...,l\}$, holds for no such $\hat{p} \in F$.
	\end{enumerate}
\end{definition}
$\vspace{0.01cm}$

{\noindent \bf Differentiation of Interval-valued Functions:}
\begin{definition} \rm \cite{chen} Let $H$ be a Hadamard manifold and $\phi : H \rightarrow \mathbb{I}$, let $T=[t^L,t^U] \in \mathbb{I}$ and $q \in H$. Then $\lim\limits_{p \rightarrow q} \phi(p) =T$, if, for every $\epsilon > 0$, there exists $\delta >0$, such that, for $0< d(p,q) < \delta$, we have $d_H(\phi(p), T) < \epsilon$.
\end{definition}

\begin{lemma}\label{lemma2.1,p1} {\rm \cite{chen}} Let $\phi:H \rightarrow \mathbb{I}$ with $\phi(p) = [\phi^L(p), ~\phi^U(p)]$ and $T = [t^L,t^U] \in \mathbb{I}$. Then, $\lim\limits_{p\rightarrow q} \phi(p) = T$ $\Leftrightarrow$ ~$\lim\limits_{p \rightarrow q} \phi^L(p) = t^L$ and $\lim\limits_{p \rightarrow q} \phi^U(p) = t^U.$
\end{lemma}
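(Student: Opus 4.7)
The plan is to reduce the equivalence to the explicit formula for the Hausdorff metric recalled in the preliminaries, namely
\[
d_H(\phi(p), T) \;=\; \max\bigl\{\,|\phi^L(p) - t^L|,\; |\phi^U(p) - t^U|\,\bigr\}.
\]
Once this identity is in force, both directions follow from the elementary observation that for non-negative real numbers $a, b$ one has $\max\{a,b\} < \epsilon$ if and only if $a < \epsilon$ and $b < \epsilon$. The Hadamard manifold structure of $H$ enters only through the ambient distance $d(p,q)$ used to quantify convergence; no geodesic or curvature information is needed.

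For the forward implication, I would fix $\epsilon > 0$ and invoke the hypothesis $\lim_{p \to q} \phi(p) = T$ to extract $\delta > 0$ such that $0 < d(p,q) < \delta$ forces $d_H(\phi(p), T) < \epsilon$. Since both $|\phi^L(p) - t^L|$ and $|\phi^U(p) - t^U|$ are dominated by $d_H(\phi(p), T)$ via the displayed maximum, the same $\delta$ simultaneously witnesses $\lim_{p\to q}\phi^L(p) = t^L$ and $\lim_{p\to q}\phi^U(p) = t^U$. For the converse, I would apply the two scalar limits with tolerance $\epsilon$ to produce $\delta_1, \delta_2 > 0$, set $\delta = \min\{\delta_1, \delta_2\}$, and note that for $0 < d(p,q) < \delta$ the maximum defining $d_H(\phi(p), T)$ is strictly less than $\epsilon$.

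I do not expect any real obstacle in this argument; the content is essentially the componentwise characterization of convergence in a product metric, transported to the setting of closed intervals via their endpoint parametrization. The one point worth being careful about is to make sure that the identification $\phi(p) = [\phi^L(p), \phi^U(p)]$ together with $\phi^L(p) \leqq \phi^U(p)$ is used only to legitimize the Hausdorff formula, so that no spurious orientation issue arises when passing to the limit.
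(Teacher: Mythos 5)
Your argument is correct and is exactly the standard one: the identity $d_H(\phi(p),T)=\max\{|\phi^L(p)-t^L|,\,|\phi^U(p)-t^U|\}$ reduces the interval limit to the two scalar limits, and the $\epsilon$--$\delta$ bookkeeping you describe is all that is needed. The paper itself states this lemma without proof (citing Chen), so there is nothing to compare against beyond noting that your route is the expected one.
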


\begin{definition}\label{definition2.8,p1}
	\rm Let $H$ be a Hadamard manifold and $E \subseteq H$ be a nonempty open subset of $H$. An interval-valued function $\phi: E \rightarrow \mathbb{I}$ with $\phi(p) = [\phi^L(p), \phi^U(p)]$ is said to be weakly directionally differentiable at $p \in E$ in the direction $w \in T_p(H)$ if the real valued functions $\phi^L(p)$ and $\phi^U(p)$ are directionally differentiable at $p \in E$ in the direction of $w \in T_p(H)$. If $\phi$ is weakly directionally differentiable at $p$ for all directions $w \in T_p(H)$, we say that $\phi$ is weakly directionally differentiable at $p$. 
\end{definition}

In \cite{stefanini}, Stefanini and Bede introduced the generalized Hukuhara difference (gH-difference) of two intervals $T$ and $S$, which is expressed as
$$T \ominus_g S = U ~ \Leftrightarrow ~ \left \{ \begin{array}{c}
	\text{(i) T = S + U, or} \\ \text{(ii) S = T +(-1)U.}  
\end{array} \right.$$
In case (i), the gH-difference coincides with the H-difference \cite{Wu}. For any two intervals $T=[t^L,t^U], ~ S = [s^L, s^U], ~ T \ominus_g S $ always exists and is unique. Also, we have 
$$T\ominus_g T = [0,0] ~~~ \text{and} ~~~ T \ominus_g S = [\text{min}\{t^L-s^L, t^U-s^U\}, \text{max}\{t^L-s^L, t^U-s^U\}].$$

\begin{definition}
	\rm \cite{chen}  Let $H$ be a Hadamard manifold and $E \subseteq H$ be a nonempty open set. Then, $\phi: E \rightarrow \mathbb{I}$ with $\phi(p) = [\phi^L(p), \phi^U(p)]$ is said to be gH-directionally differentiable at $p \in E$ in the direction $w \in T_p(H)$, if, there exists $\phi'(p;w) \in \mathbb{I}$, such that 
	$$ \phi'(p;w) = \lim\limits_{\alpha \rightarrow 0^+} \frac{\phi(exp_p~\alpha w) \ominus_g \phi(p)}{\alpha}$$
	exists, where $\phi'(p;w)$ is called the gH-directional derivative of $\phi$ at $p$ in the direction of $w$. If $\phi$ is gH-directionally differentiable at $p$, for all directions $w \in T_p(H)$, we say that $\phi$ is gH-directionally differentiable at $p$.
\end{definition}

{\noindent \bf Convexity and pseudo-convexity of interval-valued functions:}

\noindent Next, we recall the notions of convexity and pseudo-convexity presented in \cite{chen}.
\begin{definition}\label{definition2.10,p1}\rm \cite{chen} Let $E \subseteq H$ be a nonempty open geodesic convex set, let $\gamma : [0,1] \rightarrow H$ be a geodesic with $\gamma (0) = p$ and $\gamma(1) = q$. Then a function $\phi : E \rightarrow \mathbb{I}$ is said to be
	\begin{enumerate}[label=(\roman*)]
		\item LU-convex at $p \in E$, if, for all $q \in E$ and $\alpha \in [0,1]$,
		\begin{equation}
			\phi(\gamma(\alpha)) \leqq_{LU}(1-\alpha)\phi(p) + \alpha \phi(q). \hspace*{3cm}
		\end{equation}
		\item CW-convex at $p \in E$, if, for all $q \in E$ and $\alpha \in [0,1]$,
		$$	\phi(\gamma(\alpha)) \leqq_{CW}(1-\alpha)\phi(p) + \alpha \phi(q).\hspace*{3cm}$$
	\end{enumerate}
\end{definition}

\begin{remark}
	\rm 	We say, $\phi$ is LU-convex (CW-convex) on $E$ if $\phi$ is LU-convex (CW-convex) at each $p \in E$.
\end{remark}

\begin{lemma}\label{lemma2.2,p1}{\rm \cite{chen}} For a function $\phi : E \rightarrow \mathbb{I}$ with $\phi(p) = [\phi^L(p), \phi^U(p)]$, $p \in E$, defined on a nonempty open geodesic convex set $E$, we have, $\phi$ is LU-convex at $p$ $\Leftrightarrow$ $\phi^L$ and $\phi^U$ are convex at $p$.
	% $\phi$ is CW-convex at $p$ $\Leftrightarrow$ $\phi^C$ and $\phi^W$ are convex at $p$. 
\end{lemma}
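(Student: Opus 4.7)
The plan is to prove this equivalence by direct unpacking of the definition of LU-convexity into its component inequalities, exploiting the fact that both the ordering $\leqq_{LU}$ and the interval arithmetic on $\mathbb{I}$ act componentwise on the lower and upper endpoints.

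First, I would fix $q \in E$ and $\alpha \in [0,1]$, and let $\gamma:[0,1]\to H$ denote the geodesic joining $p$ to $q$. Using the scalar multiplication and addition rules recalled in the excerpt (with $1-\alpha, \alpha \geqq 0$), I would compute
\begin{equation*}
(1-\alpha)\phi(p) + \alpha \phi(q) = \bigl[\,(1-\alpha)\phi^L(p) + \alpha \phi^L(q),\; (1-\alpha)\phi^U(p) + \alpha \phi^U(q)\,\bigr].
\end{equation*}
Since $\phi(\gamma(\alpha)) = [\phi^L(\gamma(\alpha)), \phi^U(\gamma(\alpha))]$, the definition of $\leqq_{LU}$ then translates the LU-convexity inequality of Definition \ref{definition2.10,p1}(i) into the conjunction of the two scalar inequalities
\begin{align*}
\phi^L(\gamma(\alpha)) &\leqq (1-\alpha)\phi^L(p) + \alpha \phi^L(q),\\
\phi^U(\gamma(\alpha)) &\leqq (1-\alpha)\phi^U(p) + \alpha \phi^U(q).
\end{align*}

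For the forward direction, assuming $\phi$ is LU-convex at $p$, these two inequalities hold for every $q \in E$ and every $\alpha \in [0,1]$, which is exactly the convexity of $\phi^L$ and $\phi^U$ at $p$ in the sense of the convexity definition (Definition 1.5 of the excerpt) applied on the Hadamard manifold. For the converse, if both $\phi^L$ and $\phi^U$ are convex at $p$, the same pair of inequalities holds for every admissible $q$ and $\alpha$, and by the definition of $\leqq_{LU}$ they reassemble into the single interval inequality $\phi(\gamma(\alpha)) \leqq_{LU} (1-\alpha)\phi(p) + \alpha \phi(q)$, giving LU-convexity of $\phi$ at $p$.

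There is no real obstacle here since the lemma is a translation statement: the key observation is simply that both the order $\leqq_{LU}$ and the interval operations used in Definition \ref{definition2.10,p1} are defined componentwise on the endpoints, so the interval inequality is logically equivalent to the conjunction of the endpoint inequalities. No appeal to the manifold structure beyond the existence of the geodesic $\gamma$ (guaranteed by Proposition 1.1) is needed, so the argument is essentially notational.
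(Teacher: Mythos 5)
Your argument is correct: since interval addition and nonnegative scalar multiplication act componentwise on endpoints and $\leqq_{LU}$ is by definition the conjunction of the two endpoint inequalities, the LU-convexity inequality is logically equivalent to the convexity inequalities for $\phi^L$ and $\phi^U$. The paper itself gives no proof of this lemma (it is quoted from Chen's earlier work), but your componentwise unpacking is exactly the standard argument one would expect there, so nothing further is needed.
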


\begin{lemma}\label{lemma2.3,p1}{\rm \cite{chen}} Let $\phi_s : E \rightarrow \mathbb{I}$, $s \in \{1,2,...,l\}$, be LU-convex functions. Then, $\displaystyle \sum_{s=1}^{l} k_s\phi_s(p)$ is LU-convex, where $k_s > 0$, $s \in \{1,2,...,l\}$.
\end{lemma}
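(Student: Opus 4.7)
The plan is to reduce the interval-valued statement to a pair of scalar convexity statements using Lemma \ref{lemma2.2,p1}, and then invoke the classical fact that a positive linear combination of convex scalar functions is convex. First, I would write $\phi_s(p) = [\phi_s^L(p),\phi_s^U(p)]$ and, by Lemma \ref{lemma2.2,p1}, note that the LU-convexity of each $\phi_s$ on $E$ is equivalent to the ordinary convexity of both $\phi_s^L$ and $\phi_s^U$ on $E$ in the sense of Definition \ref{definition2.10,p1} (applied coordinate-wise).

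Next I would identify the lower and upper envelopes of the combination. Since every $k_s>0$, the scalar multiplication rule from the preliminaries gives $k_s\phi_s(p)=[k_s\phi_s^L(p),k_s\phi_s^U(p)]$, and the interval addition rule then yields
\[
\Phi(p)\;:=\;\sum_{s=1}^{l} k_s\phi_s(p)\;=\;\Bigl[\sum_{s=1}^{l} k_s\phi_s^L(p),\ \sum_{s=1}^{l} k_s\phi_s^U(p)\Bigr].
\]
Thus $\Phi^L=\sum_s k_s\phi_s^L$ and $\Phi^U=\sum_s k_s\phi_s^U$ are the lower and upper components of $\Phi$ (the inequality $\Phi^L\leqq\Phi^U$ needed for $\Phi$ to be interval-valued follows immediately from $\phi_s^L\leqq\phi_s^U$ and $k_s>0$).

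Now, for any geodesic $\gamma:[0,1]\to E$ with $\gamma(0)=p$, $\gamma(1)=q$ and any $\alpha\in[0,1]$, convexity of each $\phi_s^L$ gives $\phi_s^L(\gamma(\alpha))\leqq(1-\alpha)\phi_s^L(p)+\alpha\phi_s^L(q)$. Multiplying by $k_s>0$ and summing over $s$ preserves the inequality, so $\Phi^L(\gamma(\alpha))\leqq(1-\alpha)\Phi^L(p)+\alpha\Phi^L(q)$, i.e.\ $\Phi^L$ is convex on $E$; the same argument applied to the upper components shows $\Phi^U$ is convex on $E$. Finally I would apply Lemma \ref{lemma2.2,p1} in the reverse direction to conclude that $\Phi$ is LU-convex on $E$.

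The argument is essentially a bookkeeping exercise, so there is no genuine obstacle; the only point requiring a moment of care is the identification of $\Phi^L$ and $\Phi^U$, which depends critically on the hypothesis $k_s>0$ (for $k_s<0$ the interval arithmetic flips the bounds and the lemma would fail as stated). I would make this sign hypothesis explicit in the write-up.
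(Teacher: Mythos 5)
Your proof is correct and is the natural argument: the paper itself states this lemma without proof (it is quoted from \cite{chen}), and the reduction via Lemma \ref{lemma2.2,p1} to the scalar fact that a positive linear combination of convex functions is convex, together with the identification $\sum_s k_s\phi_s(p)=\bigl[\sum_s k_s\phi_s^L(p),\sum_s k_s\phi_s^U(p)\bigr]$ for $k_s>0$, is exactly the intended route. Your remark that the sign hypothesis $k_s>0$ is what keeps the endpoints from flipping is the right point of care.
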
	

The next lemma shows that weakly directional differentiability implies gH-directional differentiability.

\begin{lemma}\label{lemma2.5,p1} Let~ $\phi:E \rightarrow \mathbb{I}$ with $\phi(p) = [\phi^L(p), \phi^U(p)]$, defined on an open set $E$, be weakly directionally differentiable at $p\in E$ in the direction $w\in T_p(H)$, then the function $\phi(x)$ is gH-directionally differentiable at $p \in E$ in the direction $w \in T_p(H)$. Furthermore, we have
	\begin{equation} \label{eq*,p1}
		\phi'(p;w) = \big[\text{min} \{(\phi^L)'(p;w),~ (\phi^U)'(p;w)\}, ~ \text{max} \{(\phi^L)'(p;w), ~ (\phi^U)'(p;w)\}\big],
	\end{equation}
	where $(\phi^L)'(p;w)$ and $(\phi^U)'(p;w)$ denote directional derivatives of $\phi^L(p)$ and $\phi^U(p)$ at $p$ in the direction $w$, respectively.
\end{lemma}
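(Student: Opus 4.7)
The plan is to reduce the gH-directional derivative to a componentwise statement about the ordinary directional derivatives of $\phi^L$ and $\phi^U$, which exist by hypothesis, and then to invoke continuity of $\min$ and $\max$ together with Lemma \ref{lemma2.1,p1} to pass to the limit.

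First, for every sufficiently small $\alpha>0$ the point $\exp_p\alpha w$ lies in the open set $E$, so $\phi(\exp_p\alpha w)=[\phi^L(\exp_p\alpha w),\phi^U(\exp_p\alpha w)]$ is well defined. Using the explicit formula for the gH-difference recalled just before Definition \ref{definition2.10,p1}, namely $T\ominus_g S=[\min\{t^L-s^L,t^U-s^U\},\max\{t^L-s^L,t^U-s^U\}]$, I would write
\begin{align*}
\phi(\exp_p\alpha w)\ominus_g\phi(p) &= \bigl[\min\{A(\alpha),B(\alpha)\},\,\max\{A(\alpha),B(\alpha)\}\bigr],
\end{align*}
where $A(\alpha):=\phi^L(\exp_p\alpha w)-\phi^L(p)$ and $B(\alpha):=\phi^U(\exp_p\alpha w)-\phi^U(p)$. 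Dividing by $\alpha>0$ pulls the positive scalar through both entries and preserves the min/max structure, giving
\begin{equation*}
\frac{\phi(\exp_p\alpha w)\ominus_g\phi(p)}{\alpha} = \Bigl[\min\bigl\{\tfrac{A(\alpha)}{\alpha},\tfrac{B(\alpha)}{\alpha}\bigr\},\,\max\bigl\{\tfrac{A(\alpha)}{\alpha},\tfrac{B(\alpha)}{\alpha}\bigr\}\Bigr].
\end{equation*}

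Next, by the weak directional differentiability of $\phi$ at $p$ in direction $w$ (Definition \ref{definition2.8,p1}), the one-sided limits $\lim_{\alpha\to0^+}A(\alpha)/\alpha=(\phi^L)'(p;w)$ and $\lim_{\alpha\to0^+}B(\alpha)/\alpha=(\phi^U)'(p;w)$ exist in $\mathbb{R}$. Since $\min$ and $\max$ are continuous functions $\mathbb{R}^2\to\mathbb{R}$, their values at $(A(\alpha)/\alpha,B(\alpha)/\alpha)$ converge, as $\alpha\to0^+$, to $\min\{(\phi^L)'(p;w),(\phi^U)'(p;w)\}$ and $\max\{(\phi^L)'(p;w),(\phi^U)'(p;w)\}$, respectively. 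Calling these limits $\ell$ and $u$ (with $\ell\le u$ by construction), the pair $[\ell,u]$ is a genuine element of $\mathbb{I}$, and applying Lemma \ref{lemma2.1,p1} to the lower and upper endpoint functions yields $\lim_{\alpha\to0^+}\frac{\phi(\exp_p\alpha w)\ominus_g\phi(p)}{\alpha}=[\ell,u]$ in the Hausdorff metric. This is exactly the existence of the gH-directional derivative together with the displayed formula \eqref{eq*,p1}.

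The only subtle point, and the one I would highlight rather than labor through, is the justification of the formula $T\ominus_g S=[\min\{t^L-s^L,t^U-s^U\},\max\{t^L-s^L,t^U-s^U\}]$ applied to the difference quotient: one must check that the right-hand side is indeed an element of $\mathbb{I}$ (trivially, $\min\le\max$) and that the gH-difference is single-valued, both of which are recorded in the paragraph preceding Definition \ref{definition2.10,p1}. Everything else is the continuity of elementary operations combined with Lemma \ref{lemma2.1,p1}, so no further delicate estimate is needed.
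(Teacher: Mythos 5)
Your proposal is correct and follows essentially the same route as the paper's proof: expand the gH-difference via the explicit $\min/\max$ formula, divide by $\alpha$, and pass to the limit componentwise using Lemma \ref{lemma2.1,p1} and the weak directional differentiability hypothesis. Your explicit appeal to the continuity of $\min$ and $\max$ is a small refinement the paper leaves implicit, but it does not change the argument.
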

\begin{proof}
	\begin{align*}
		\phi'(p;w) &= \lim\limits_{\alpha \rightarrow 0^+} \frac{\phi(exp_p~\alpha w)\ominus_g\phi(p)}{\alpha}\\
		&= \lim\limits_{\alpha \rightarrow 0^+}\frac{1}{\alpha}\big[\text{min}\big\{\phi^L(exp_p~\alpha w) - \phi^L(p),~ \phi^U(exp_p~\alpha w) - \phi^U(p),\\
		&~~~~~~~~~~~~~~~~~~\text{max}\big\{\phi^L(exp_p~\alpha w) - \phi^L(p),~ \phi^U(exp_p~\alpha w) - \phi^U(p)\big\}\big]\\
		&=\lim\limits_{\alpha \rightarrow 0^+}\bigg[\text{min}\bigg\{\frac{\phi^L(exp_p~\alpha w) - \phi^L(p)}{\alpha},~\frac{\phi^U(exp_p~\alpha w) - \phi^U(p)}{\alpha} \bigg\},\\
		&~~~~~~~~~~~~~~~~~\text{max}\bigg\{\frac{\phi^L(exp_p~\alpha w) - \phi^L(p)}{\alpha},~\frac{\phi^U(exp_p~\alpha w) - \phi^U(p)}{\alpha} \bigg\}\bigg],
	\end{align*}
	which, by Lemma \ref{lemma2.1,p1} and Definition \ref{definition2.8,p1}, yields
	$$\phi'(p;w) = [\text{min} \{(\phi^L)'(p;w),~ (\phi^U)'(p;w)\}, ~ \text{max} \{(\phi^L)'(p;w), ~ (\phi^U)'(p;w)\}]$$
\end{proof}

\begin{remark}
	\rm In general, the converse of Lemma \ref{lemma2.5,p1} is not true, i.e., if an IVF is gH-directionally differentiable, then the equation (\ref{eq*,p1}) may remain invalid. This is in contrast to the assertion made by chen (\cite{chen}, Lemma 3.2), which claims that gH-directionally differentiability of an IVF defined on a Hadamard manifold is equivalent to the weak directional differentiability of that function. For counter example, one can refer to \cite{hilal}.
\end{remark}

\begin{lemma}\label{lemma2.6,p1} For a function~ $\phi : E \rightarrow \mathbb{I}$, with~ $\phi(p)~= ~[\phi^L(p),~ \phi^U(p)]\\ = <\phi^C(p),~ \phi^W(p)>$, $p \in E$, defined on a nonempty open geodesic convex set $E$, we have, $\phi$ is CW-convex at $p$ $\Leftrightarrow$ $\phi^C$ and $\phi^W$ are convex at $p$.
	% $\phi$ is CW-convex at $p$ $\Leftrightarrow$ $\phi^C$ and $\phi^W$ are convex at $p$. 
\end{lemma}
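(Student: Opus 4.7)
The plan is to mirror the strategy of Lemma \ref{lemma2.2,p1}, but using the center/half-width decomposition $T = \langle t^C, t^W\rangle$ in place of the endpoint decomposition $T = [t^L, t^U]$. The whole argument reduces the single interval-valued CW-convexity inequality to a pair of scalar convexity inequalities via bi-implications, giving both directions of the equivalence simultaneously.

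First I would compute the right-hand side of the defining CW-convexity inequality in the $\langle \cdot,\cdot\rangle$ notation. For $\alpha \in [0,1]$, both $\alpha$ and $1-\alpha$ are non-negative, so the rule $kT = [kt^L, kt^U]$ for $k \geqq 0$ applies without the lower/upper swap, yielding
$$(1-\alpha)\phi(p) + \alpha\phi(q) = \bigl[(1-\alpha)\phi^L(p) + \alpha\phi^L(q),\; (1-\alpha)\phi^U(p) + \alpha\phi^U(q)\bigr].$$
Averaging the two endpoints, and then taking half their difference, I obtain that the center and half-width of this interval are, respectively,
$$(1-\alpha)\phi^C(p) + \alpha\phi^C(q) \quad \text{and} \quad (1-\alpha)\phi^W(p) + \alpha\phi^W(q),$$
since $t \mapsto t^C$ and $t \mapsto t^W$ are linear under non-negative scalars and additive across the interval sum.

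Next I apply the definition of $\leqq_{CW}$ to the CW-convexity inequality $\phi(\gamma(\alpha)) \leqq_{CW} (1-\alpha)\phi(p) + \alpha\phi(q)$. Combined with the computation above, this is equivalent to the conjunction
$$\phi^C(\gamma(\alpha)) \leqq (1-\alpha)\phi^C(p) + \alpha\phi^C(q), \qquad \phi^W(\gamma(\alpha)) \leqq (1-\alpha)\phi^W(p) + \alpha\phi^W(q).$$
Quantifying universally over $q \in E$ and $\alpha \in [0,1]$, the first inequality is exactly convexity of $\phi^C$ at $p$ and the second is exactly convexity of $\phi^W$ at $p$, in the sense of the Udriste-style definition used throughout the paper. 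Since every step is a bi-implication, both directions of the claimed equivalence follow at once, and no separate forward/backward argument is needed.

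There is essentially no serious obstacle here. The only subtle point I would highlight is that the identity $(1-\alpha)\phi(p) + \alpha \phi(q) = \langle (1-\alpha)\phi^C(p) + \alpha\phi^C(q),\, (1-\alpha)\phi^W(p) + \alpha\phi^W(q)\rangle$ relies on both scalars being non-negative: if $\alpha$ were negative or exceeded $1$, the interval arithmetic rule for $kT$ would swap endpoints, breaking the linearity of the center/width with respect to the coefficients and invalidating the step. Restricting to $\alpha \in [0,1]$ (as in the definition of convexity along geodesics) sidesteps this entirely, so the proof is a direct verification parallel to that of Lemma \ref{lemma2.2,p1}.
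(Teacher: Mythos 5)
Your proof is correct and follows the same route the paper intends: the paper's own ``proof'' is a one-line appeal to the definition of $\leqq_{CW}$ (its citation of Lemma~\ref{lemma2.1,p1}, a statement about limits, appears to be a typo), and you have simply supplied the direct verification it omits. Your computation that the center and half-width of $(1-\alpha)\phi(p)+\alpha\phi(q)$ are the corresponding convex combinations of $\phi^C$ and $\phi^W$, valid because $\alpha,1-\alpha\geqq 0$, is exactly the point that makes the equivalence work, and your chain of bi-implications correctly yields both directions at once.
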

\begin{proof}
	The result follows directly from Lemma \ref{lemma2.1,p1} and Definition \ref{definition2.10,p1}(ii).
\end{proof}
\begin{definition}\label{definition2.11,p1}{\rm \cite{chen}} \rm Let $\phi: E \rightarrow \mathbb{R}$ be a directionally differentiable function defined on a nonempty open geodesic convex subset $E \subseteq H$. Then $\phi$ is said to be
	\begin{enumerate}[label=(\roman*)]
		\item pseudo-convex at $\bar{p} \in E$ if, for any $p \in E$,
		$$\phi(p) < \phi(\bar{p}) \Rightarrow \phi'(\bar{p}; exp^{-1}_{\bar{p}} p) < 0.\hspace*{5cm}$$
		\item strictly pseudo-convex at $\bar{p} \in E$ if, for any $p \in E$,
		$$\phi(p) \leqq \phi(\bar{p}) \Rightarrow \phi'(\bar{p}; exp^{-1}_{\bar{p}} p) < 0.\hspace*{5cm}$$	
	\end{enumerate}
\end{definition}

\begin{definition}\label{definition2.12,p1}\rm Let $E \subseteq H$ be a nonempty open geodesic convex set and the interval-valued function $\phi : E \rightarrow \mathbb{I}$ with $\phi(p)=[\phi^L(p), \phi^U(p)]$ be weakly directionally differentiable at $p \in E$, then\\ 
	(i) $\phi$ is LU-pseudo-convex (LU-strictly pseudo-convex) at $\bar{p}$ $\Longleftrightarrow$ $\phi^L$ and $\phi^U$ are pseudo-convex (strictly pseudo-convex) at $\bar{p}.$\\
	(ii) $\phi$ is CW-pseudo-convex (CW-strictly pseudo-convex) at $\bar{p}$ $\Longleftrightarrow$ $\phi^C$ and $\phi^W$ are pseudo-convex (strictly pseudo-convex) at $\bar{p}.$\\
\end{definition}

\section{The KKT Optimality Conditions}
Consider $\phi : E \rightarrow \mathbb{R}$ and $\psi_u:E \rightarrow\mathbb{R}$, u=1,2,...,t, $E \subseteq H$  be a nonempty open geodesic convex set. We consider the following real valued optimization problem (RVOP) 
$$ ~~~~~~~\text{(RVOP)} ~~~~~~ \text{Minimize} ~~~~~\phi(p),~~~~~~~~~~~~~~~~~~~~~$$
$$~~~~~~~~~~~~~~~~~~~~~~~~~~~~~~~~~~\text{subject to},~~ \psi_u(p) \leqq 0,~u \in \{1,2,...,t\}.~~~~~~$$
Let $F=\{p \in E : \psi_u(p) \leqq 0, u = 1,2,...,t\}$ be the feasible region of (RVOP).

\begin{theorem}\label{theorem3.1,p1}{\rm \cite{chen}} Let $\bar{p} \in F$ and assume that the functions $\phi : E \rightarrow \mathbb{R}$ and $\psi_u : E \rightarrow \mathbb{R}$, $u \in \{1,2,...,t\}$, are convex and weakly directional differentiable at $\bar{p}$. Suppose that there exist scalars $0 \leqq \mu_u \in \mathbb{R}$, $u \in \{1,2,...,t\}$, such that,
	$$\phi'(\bar{p}; exp^{-1}_{\bar{p}}~p) + \sum_{u=1}^{t} \mu_u\psi_u'(\bar{p}; exp^{-1}_{\bar{p}}~p) \geqq 0;$$
	$$\mu_u\psi_u(\bar{p}) =0, ~~ u \in \{1,2,...,t\},$$
	then, $\bar{p}$ is an optimal solution of problem (RVOP).
\end{theorem}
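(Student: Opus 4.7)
The plan is to argue by contradiction, mimicking the classical Euclidean KKT sufficiency argument but using geodesics and directional derivatives on $H$. Suppose $\bar{p}$ is not optimal, so there exists $p \in F$ with $\phi(p) < \phi(\bar{p})$. The key ingredient I need is the geodesic-convex subgradient-type inequality: for a convex, directionally differentiable function $f$ on $E$,
$$f(q) - f(\bar{p}) \geqq f'(\bar{p};\exp^{-1}_{\bar{p}} q).$$
This is the analogue of Remark \ref{remark2.1,p1}(ii) for the directionally differentiable case; it is obtained by taking the geodesic $\gamma(\alpha) = \exp_{\bar{p}}(\alpha \exp^{-1}_{\bar{p}} q)$, rearranging the convexity inequality as $\tfrac{f(\gamma(\alpha))-f(\bar{p})}{\alpha} \leqq f(q)-f(\bar{p})$, and letting $\alpha \to 0^+$.

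Applying this inequality to $\phi$ with $q=p$ gives $\phi'(\bar{p};\exp^{-1}_{\bar{p}} p) \leqq \phi(p) - \phi(\bar{p}) < 0$. Next I handle the constraint terms. For each $u$, convexity of $\psi_u$ yields $\psi_u'(\bar{p};\exp^{-1}_{\bar{p}} p) \leqq \psi_u(p) - \psi_u(\bar{p})$. Multiplying by $\mu_u \geqq 0$ and summing over $u$,
$$\sum_{u=1}^{t} \mu_u \psi_u'(\bar{p};\exp^{-1}_{\bar{p}} p) \leqq \sum_{u=1}^{t}\mu_u \psi_u(p) - \sum_{u=1}^{t}\mu_u \psi_u(\bar{p}).$$
The complementarity hypothesis $\mu_u \psi_u(\bar{p})=0$ kills the second sum, and feasibility of $p$ together with $\mu_u \geqq 0$ makes the first sum nonpositive. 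Hence $\sum_u \mu_u \psi_u'(\bar{p};\exp^{-1}_{\bar{p}} p) \leqq 0$.

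Adding the two estimates gives $\phi'(\bar{p};\exp^{-1}_{\bar{p}} p) + \sum_{u=1}^{t}\mu_u \psi_u'(\bar{p};\exp^{-1}_{\bar{p}} p) < 0$, which directly contradicts the KKT inequality in the hypothesis evaluated at this $p$. The contradiction forces $\bar{p}$ to be optimal.

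The only non-routine step is the subgradient-type inequality for convex functions that are only directionally (not Fréchet) differentiable on a Hadamard manifold; once that is in hand, the rest is bookkeeping with the sign conditions $\mu_u \geqq 0$, $\psi_u(p)\leqq 0$ for $p\in F$, and the complementary slackness $\mu_u\psi_u(\bar{p})=0$. I would state and briefly prove the inequality as a preliminary remark (just one line, via the limit argument above) so that the proof of Theorem \ref{theorem3.1,p1} itself reduces to a short chain of inequalities ending in a contradiction.
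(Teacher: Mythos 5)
Your argument is correct and complete. Note that the paper itself gives no proof of this theorem: it is imported verbatim from \cite{chen} as a known result, so there is no in-paper argument to compare against. Your route is the standard convex-KKT sufficiency argument transplanted to the Hadamard setting, and every step checks out: the subgradient-type inequality $f(q)-f(\bar{p}) \geqq f'(\bar{p};\exp^{-1}_{\bar{p}}q)$ does follow for a geodesically convex, directionally differentiable $f$ by dividing the convexity inequality $f(\gamma(\alpha))-f(\bar{p}) \leqq \alpha\,(f(q)-f(\bar{p}))$ by $\alpha$ and letting $\alpha \to 0^+$ (this needs $E$ geodesic convex so that $\gamma(\alpha)\in E$, which the setup of (RVOP) guarantees, and it is exactly the directional-derivative analogue of Remark \ref{remark2.1,p1}(ii)); feasibility of $p$ plus $\mu_u \geqq 0$ and complementary slackness then make the constraint sum nonpositive, and the strict inequality from $\phi(p)<\phi(\bar{p})$ contradicts the stated KKT inequality at that $p$. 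The only presentational caveat is that the theorem statement leaves implicit that the KKT inequality is assumed to hold for all $p$ (as the paper's later use in Theorem \ref{theorem3.2,p1} confirms), which is precisely how you use it.
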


\noindent Next, we consider the following multi-objective IVOP problem:
$$~~~~~ \text{(MIVOP2)} ~~~~~~ \text{Minimize} ~~~~~\phi(p) = (\phi_1(p), \phi_2(p),...,\phi_l(p)),~~~~~~~~~~~~~~~~~~~~~$$
$$~~~~\text{subject to},~~ \psi_u(p) \leqq 0,~u \in \{1,2,...,t\}~~~~~~~$$
where $\phi_s : E \rightarrow \mathbb{I}$ with $\phi_s(p) = [\phi_s^L(p), \phi_s^U(p)]$, $s \in \{1,2,...,l\}$, are interval-valued functions defined on a geodesic convex set $E$ and $\psi_u : E \rightarrow \mathbb{R}$, u=1,2,...,t, are assumed to be real-valued convex functions on $E$. It follows that the problem (MIVOP1) and (MIVOP2) become same by considering the geodesic convex set $F$ as $F =\{p : \psi_u(p) \leqq 0, ~~ u = 1,2,...,t\}.$

\subsection{KKT Conditions for Type-I and Type-II POS}
In this section, we discuss some KKT conditions for type-I POS by using the concepts of LU-convexity and LU-pseudo-convexity as well as some conditions for type-II POS by using CW-convexity and  CW-pseudo-convexity for (MIVOP2).
\begin{theorem}\label{theorem3.2,p1}
	Let $\psi_u: E \rightarrow \mathbb{R}$, $u \in \{1,2,...,t\}$, be convex at $\bar{p} \in F$.
	\begin{enumerate}[label=\bf (\alph*)]
		\item Suppose that the interval-valued functions $\phi_s: E \rightarrow \mathbb{I}$ with $\phi_s(p) = [\phi_s^L(p), \phi_s^U(p)]$, $s \in \{1,2,...,l\}$, are LU-convex and weakly directionally differentiable at $\bar{p}$ and that, there exist, $0 < \lambda_s^L,~ \lambda_s^U \in \mathbb{R}$, $s \in \{1,2,...,l\}$, and $0 \leqq \mu_u \in \mathbb{R}$, $u \in \{1,2,...,t\}$, such that:
		\begin{enumerate}[label=(\roman*)]
			\item $\displaystyle \sum_{s=1}^{l} \lambda_s^L (\phi_s^L)'(\bar{p};~ exp^{-1}_{\bar{p}} ~ p) + \displaystyle \sum_{s=1}^{l} \lambda_s^U (\phi_s^U)'(\bar{p};~ exp^{-1}_{\bar{p}} ~ p) + \displaystyle\sum_{u=1}^{t} \mu_u \psi_u'(\bar{p};~ exp^{-1}_{\bar{p}} ~ p) \geqq 0;$
			\item $\mu_u\psi_u(\bar{p}) = 0, ~ u \in \{1,2,...,t\},$
		\end{enumerate}
	\end{enumerate}
	then, $\bar{p}$ is a type-I POS of (MIVOP2).
	\begin{enumerate}[resume*]
		\item  Suppose that the interval-valued functions $\phi_s: E \rightarrow \mathbb{I}$ with $\phi_s(p) = [\phi_s^L(p), \phi_s^U(p)]$, $s \in \{1,2,...,l\}$, are CW-convex and weakly directionally differentiable at $\bar{p}$ and that, there exist, $0 < \lambda_s^C,~ \lambda_s^W \in \mathbb{R}$, $s \in \{1,2,...,l\}$, and $0 \leqq \mu_u \in \mathbb{R}$, $u \in \{1,2,...,t\}$, such that:
		\begin{enumerate}
			\item[(iii)] $\displaystyle \sum_{s=1}^{l} \lambda_s^C (\phi_s^C)'(\bar{p};~exp^{-1}_{\bar{p}} p) + \displaystyle \sum_{s=1}^{l} \lambda_s^W (\phi_s^W)'(\bar{p};~ exp^{-1}_{\bar{p}} p)+ \displaystyle\sum_{u=1}^{t} \mu_u \psi_u'(\bar{p};~ exp^{-1}_{\bar{p}} p) \geqq 0;$
			\item[(iv)] $\mu_u\psi_u(\bar{p}) = 0$, ~ $u \in \{1,2,...,t\}$,
		\end{enumerate} 
	\end{enumerate}
	then, $\bar{p}$ is a type-II POS of (MIVOP2).
	\begin{enumerate}[resume*]
		\item Suppose that the interval-valued functions $\phi_s: E \rightarrow \mathbb{I}$ with $\phi_s(p) = [\phi_s^L(p), \phi_s^U(p)]$, $s \in \{1,2,...,l\}$, are CW-convex and weakly directionally differentiable at $\bar{p}$ and that, there exist, $0 < \lambda_s^L,~ \lambda_s^U \in \mathbb{R}$, $s \in \{1,2,...,l\}$, and $0 \leqq \mu_u \in \mathbb{R}$, $u \in \{1,2,...,t\}$, such that:
		\begin{enumerate}
			\item[(v)] $\lambda_s^L < \lambda_s^U$, ~ $s \in \{1,2,...,l\}$;
			\item[(vi)] $\displaystyle \sum_{s=1}^{l} \lambda_s^L (\phi_s^L)'(\bar{p};~ exp^{-1}_{\bar{p}} ~ p) + \displaystyle \sum_{s=1}^{l} \lambda_s^U (\phi_s^U)'(\bar{p};~ exp^{-1}_{\bar{p}} ~ p) + \displaystyle\sum_{u=1}^{t} \mu_u \psi_u'(\bar{p};~ exp^{-1}_{\bar{p}} ~ p) \geqq 0;$
			\item[(vii)] $\mu_u\psi_u(\bar{p}) = 0$, ~ $u \in \{1,2,...,t\}$,
		\end{enumerate}
	\end{enumerate}
	then, $\bar{p}$ is a type-II POS of (MIVOP2).
\end{theorem}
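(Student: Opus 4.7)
The plan for all three parts is proof by contradiction, exploiting the first-order convexity inequality $\phi(\hat{p}) - \phi(\bar{p}) \geqq \phi'(\bar{p}; exp^{-1}_{\bar{p}} \hat{p})$ from Remark \ref{remark2.1,p1}(ii) to push a sign on function values onto a sign on directional derivatives. The recipe in each case is the same: assume some $\hat{p} \in F$ violates the relevant Pareto definition; split the interval-valued convexity into componentwise scalar convexity (via Lemma \ref{lemma2.2,p1} for LU, Lemma \ref{lemma2.6,p1} for CW); bound each $(\phi_s^{\cdot})'(\bar{p}; exp^{-1}_{\bar{p}}\hat{p})$ and $\psi_u'(\bar{p}; exp^{-1}_{\bar{p}}\hat{p})$ from above by $0$, with strict inequality somewhere coming from the strictness clause in $<_{LU}$ or $<_{CW}$; then weight by the strictly positive $\lambda_s^{\cdot}$ and sum to contradict the KKT stationarity equation.

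For part \textbf{(a)}, I would pick $\hat{p} \in F$ with $\phi(\hat{p}) <_{LU} \phi(\bar{p})$. Definition \ref{definition2.5,p1} together with the componentwise reading of $<_{LU}$ in (\ref{equation1,p1}) gives $\phi_s^L(\hat{p}) \leqq \phi_s^L(\bar{p})$ and $\phi_s^U(\hat{p}) \leqq \phi_s^U(\bar{p})$ for every $s$, with at least one strict at some index $s_0$. Lemma \ref{lemma2.2,p1} plus the convexity bound yield $(\phi_s^L)'(\bar{p}; exp^{-1}_{\bar{p}}\hat{p}) \leqq 0$ and $(\phi_s^U)'(\bar{p}; exp^{-1}_{\bar{p}}\hat{p}) \leqq 0$ for every $s$, strict at $s_0$. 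Feasibility $\psi_u(\hat{p}) \leqq 0$ combined with complementarity (ii) forces $\psi_u(\bar{p}) = 0$ whenever $\mu_u > 0$, and convexity of $\psi_u$ then gives $\mu_u \psi_u'(\bar{p}; exp^{-1}_{\bar{p}}\hat{p}) \leqq 0$ in all cases. Since $\lambda_s^L, \lambda_s^U > 0$, the weighted sum in hypothesis (i) picks up a strictly negative $s_0$-term and stays $\leqq 0$ everywhere else, contradicting (i). Part \textbf{(b)} runs verbatim after the substitution $(L,U) \mapsto (C,W)$, invoking Lemma \ref{lemma2.6,p1} in place of Lemma \ref{lemma2.2,p1} and Definition \ref{definition2.6,p1} in place of Definition \ref{definition2.5,p1}.

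Part \textbf{(c)} I propose to reduce directly to part (b) using the algebraic identity
\begin{equation*}
\lambda_s^L (\phi_s^L)'(\bar{p}; v) + \lambda_s^U (\phi_s^U)'(\bar{p}; v) = (\lambda_s^L + \lambda_s^U)(\phi_s^C)'(\bar{p}; v) + (\lambda_s^U - \lambda_s^L)(\phi_s^W)'(\bar{p}; v),
\end{equation*}
which follows from $\phi_s^L = \phi_s^C - \phi_s^W$, $\phi_s^U = \phi_s^C + \phi_s^W$ and the linearity of the directional derivative (the directional derivatives of $\phi_s^C$ and $\phi_s^W$ at $\bar{p}$ exist because $\phi_s$ is weakly directionally differentiable, so $\phi_s^L$ and $\phi_s^U$ are directionally differentiable). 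Condition (v) makes $\lambda_s^U - \lambda_s^L > 0$, so the substitutions $\lambda_s^C := \lambda_s^L + \lambda_s^U$ and $\lambda_s^W := \lambda_s^U - \lambda_s^L$ convert hypothesis (vi) into hypothesis (iii) and (vii) into (iv); combined with the standing CW-convexity assumption, part (b) then delivers the type-II POS conclusion.

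The main obstacle will be the case bookkeeping in part (a): by (\ref{equation1,p1}), the strictness in $\phi(\hat{p}) <_{LU} \phi(\bar{p})$ may sit on $\phi_{s_0}^L$ alone, on $\phi_{s_0}^U$ alone, or on both, and I must confirm in every subcase that a correspondingly positive multiplier ($\lambda_{s_0}^L$ or $\lambda_{s_0}^U$) is available to preserve the strict inequality across the weighted sum. Once this template for (a) is in place, (b) is a mechanical relabeling and (c) is a one-line algebraic reduction.
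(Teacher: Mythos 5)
Your proposal is correct, but it routes around the paper's main device. The paper scalarizes first: it defines $\bar{\phi}(p) = \sum_{s} \lambda_s^L \phi_s^L(p) + \sum_{s} \lambda_s^U \phi_s^U(p)$, checks via Lemmas \ref{lemma2.2,p1} and \ref{lemma2.3,p1} that $\bar{\phi}$ is convex and directionally differentiable, invokes the scalar KKT sufficiency result (Theorem \ref{theorem3.1,p1}) to conclude $\bar{\phi}(\bar{p}) \leqq \bar{\phi}(p)$ on the feasible set, and only then runs the contradiction at the level of \emph{function values}: a Pareto-violating $\hat{p}$ forces $\bar{\phi}(\hat{p}) < \bar{\phi}(\bar{p})$. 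You instead run the contradiction at the level of \emph{directional derivatives}, using the first-order inequality of Remark \ref{remark2.1,p1}(ii) componentwise together with complementary slackness to show the left-hand side of hypothesis (i) is strictly negative at $p = \hat{p}$. In effect you have inlined the proof of Theorem \ref{theorem3.1,p1} rather than citing it; the underlying inequality is the same, so neither route is more general, but yours is self-contained and makes explicit where each sign (including the handling of $\mu_u \psi_u'(\bar{p};\cdot) \leqq 0$ via active constraints) comes from, whereas the paper's is shorter by delegating that bookkeeping to the cited theorem. Your treatment of part (c) — the substitution $\lambda_s^C = \lambda_s^L + \lambda_s^U$, $\lambda_s^W = \lambda_s^U - \lambda_s^L$ with positivity supplied by condition (v), reducing (vi) to (iii) — is exactly the paper's reduction. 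The case bookkeeping you flag as the main obstacle in part (a) is harmless: whichever of $\phi_{s_0}^L$, $\phi_{s_0}^U$ carries the strict inequality, the corresponding multiplier $\lambda_{s_0}^L$ or $\lambda_{s_0}^U$ is assumed strictly positive, so the strict sign survives the weighted sum in every subcase of (\ref{equation1,p1}).
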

\begin{proof}
	Define, $\bar{\phi} : E \rightarrow \mathbb{R}$ as follows
	\begin{equation}
		\bar{\phi}(p) = \sum_{s=1}^{l} \lambda_s^L \phi^L_s(p) + \sum_{s=1}^{l} \lambda_s^U \phi^U_s(p).
	\end{equation}
	Since, each $\phi_s(p)$, $s=1,2,...l$, is LU-convex function. It follows from Lemma \ref{lemma2.2,p1} and Definition \ref{definition2.8,p1} that $\phi_s^L(p)$ and $\phi_s^U(p)$ are convex functions and directionally differentiable at $\bar{p}$. From Lemma \ref{lemma2.3,p1}, $\bar{\phi}(p)$ is a convex function and directionally differentiable at $\bar{p}$. Since,
	$$\bar{\phi}'(\bar{p}; exp^{-1}_{\bar{p}} ~p) = \sum_{s=1}^{l}\lambda_s^L (\phi^L_s)'(\bar{p}; exp^{-1}_{\bar{p}} ~p) + \sum_{s=1}^{l} \lambda_s^U (\phi^U_s)'(\bar{p}; exp^{-1}_{\bar{p}} ~p).$$
	From (i), we have \\
	$$(\bar{\phi})'(\bar{p}; exp^{-1}_{\bar{p}} ~p) + \sum_{u=1}^{t} \mu_u \psi_u'(\bar{p}; exp^{-1}_{\bar{p}} ~p) \geqq 0, ~ \text{for all}~ p \in E;$$
	$$\mu_u\psi_u(\bar{p}) = 0, ~~~  u \in \{1,2,...,t\}.$$ 
	This together with Theorem \ref{theorem3.1,p1}, imply that $\bar{p}$ is an optimal solution of $\bar{\phi}(p)$ subjected to constraints of (MIVOP2); so, 
	\begin{equation}\label{equation4,p1}
		\bar{\phi}(\bar{p}) \leqq \bar{\phi}(p), ~ \text{for all} ~ p \in E.
	\end{equation}
	Now, let on contrary that $\bar{p}$ is not a type-I POS of (MIVOP2), then from Definition \ref{definition2.5,p1}(i), there exists $\hat{p} \in F$ and $c \in \{1,2,...,l\}$ such that
	$$\phi_c(\hat{p}) <_{LU} \phi_c (\bar{p}),$$
	i.e., from (\ref{equation1,p1}), we get
	\begin{equation}\label{equation5,p1}
		\begin{array}{ccc}
			\begin{array}{c}
				\phi_c^L(\hat{p}) < \phi_c^L(\bar{p})\\ \phi_c^U(\hat{p}) \leqq \phi_c^U(\bar{p})
			\end{array}
			&
			\text{or~} \left \{ \begin{array}{c}
				\phi_c^L(\hat{p}) \leqq \phi_c^L(\bar{p})\\ \phi_c^U(\hat{p}) < \phi_c^U(\bar{p})
			\end{array} \right.
			&
			\text{or ~} \left \{ \begin{array}{c}
				\phi_c^L(\hat{p}) < \phi_c^L(\bar{p})\\ \phi_c^U(\hat{p}) < \phi_c^U(\bar{p}).
			\end{array} \right.
		\end{array}
	\end{equation}
	For $s \neq c$, $\phi_s(\hat{p}) \leqq_{LU} \phi_s(\bar{p})$, i.e.,
	\begin{equation}\label{equation6,p1}
		\phi_s^L(\hat{p}) \leqq \phi_s^L(\bar{p}) \text{~~and~~} \phi_s^U(\hat{p}) \leqq \phi_s^U(\bar{p}).
	\end{equation}
	Since $\lambda_s^L, ~ \lambda_s^U > 0$, $s \in \{1,2,...,l\}$, and using (\ref{equation5,p1}) and (\ref{equation6,p1}), we have 
	$$\sum_{s=1}^{l} \lambda_s^L \phi^L_s(\hat{p}) + \sum_{s=1}^{l} \lambda_s^U \phi^U_s(\hat{p}) < \sum_{s=1}^{l} \lambda_s^L \phi^L_s(\bar{p}) + \sum_{s=1}^{l} \lambda_s^U \phi^U_s(\bar{p}).$$
	$$\text{i.e.,} ~~~~~ \bar{\phi}(\hat{p}) < \bar{\phi}(\bar{p}),$$
	which contradicts with (\ref{equation4,p1}). Hence, we conclude that $\bar{p}$ is a type-I POS of (MIVOP2).\\\\
	For part {\bf (b)}, we define
	$$ \bar{\phi}(p) = \sum_{s=1}^{l} \lambda_s^C \phi^C_s(p) + \sum_{s=1}^{l} \lambda_s^W \phi^W_s(p).$$
	The rest follows same as that of result {\bfseries (a)} and using lemma \ref{lemma2.6,p1}.\\\\
	For part {\bfseries (c)}, we assume\\
	$$\lambda_s^L = \frac{\lambda_s^C - \lambda_s^W}{2} ~~ \text{and}~~ \lambda_s^U = \frac{\lambda_s^C + \lambda_s^W}{2}.$$
	After simple calculations and using condition $(v)$, we obtain 
	$$\lambda_s^C = \lambda_s^L + \lambda_s^U >0 ~~ \text{and} ~~ \lambda_s^W = \lambda_s^U - \lambda_s^L >0.$$
	The result thus follows immediately from {\bf (b)}.
\end{proof}

We now present an example of an optimization problem with multi-objective interval-valued functions in which some of the objective functions and constraints fail to be LU-convex in linear space and hence the results in \cite{ycc}, \cite{eh},\cite{Wu}, \cite{Wu1}, \cite{jz} cannot be applied to such problem. However, the same functions happen to be LU-convex in a Riemannian manifold under a suitable Riemannian metric.

\begin{example}\label{example3.1,p1}
	\rm	Let $H = \mathbb{R}^2_{++} :=\{(p_1,~p_2) \in \mathbb{R}^2 : p_1, ~p_2 >0\}$ be a Riemannian manifold with Riemannian metric $<,> ~=~ <Q(p)w,~z>$ for any pair $(z,w) \in T_p(H) \times T_p(H)$, where $Q(p) = (g_{ij}(p))$ defines a $2 \times 2$ matrix $Q$, given by $g_{ij}(p) = (\delta_{ij}/p_ip_j)$.\\
	The Riemannian distance $d: H \times H \rightarrow \mathbb{R}^+$ between any two points $p = (p_1, p_2)$ and $q = (q_1,q_2)$ is given by\\
	$$d(p,q) = \Vert \big (ln \frac{p_1}{q_1}, ln \frac{p_2}{q_2}\big ) \Vert.$$
	We know that the sectional curvature of H is identically 0 and H is a Hadamard manifold. For more details see \cite{bento}.
	The geodesic curve $\gamma : \mathbb{R} \rightarrow H$ satisfying $\gamma (0) = p \in M$ and $\gamma'(0) = w \in T_p(H) = \mathbb{R}^2$ is given by\\
	$$\gamma (\alpha) = \big (p_1e^{\frac{w_1}{p_1}\alpha},~p_2e^{\frac{w_2}{p_2}\alpha} \big).$$
	For any $p=(p_1,p_2) \in H$ and any $w=(w_1,w_2) \in T_p(H) = \mathbb{R}^2$, the exponential map $exp_p : T_p(H) \rightarrow H$ is given by\\
	$$exp_p(w) = \gamma (1) = \big (p_1e^{\frac{w_1}{p_1}},~p_2e^{\frac{w_2}{p_2}} \big). $$
	For any $p= (p_1,p_2)$, $q = (q_1,q_2)$ $\in H$ and $w = (w_1,w_2) \in T_p(H)$, the inverse exponential map is
	$$exp_{(p_1,p_2)}^{-1} (q_1,q_2) = (w_1,w_2) = \big (p_1 ln \frac{q_1}{p_1}, ~ p_2ln \frac{q_2}{p_2} \big ).$$
	Let $\phi_s: H \rightarrow \mathbb{I}$, $s \in \{1,2\}$, be interval-valued functions and $\psi_u: H \rightarrow \mathbb{R}$, $u \in \{1,2,3,4,5\}$, be the real valued constraint functions for the following.
	\begin{align*}
		\text{(MIVOP3)}~~\text{Minimize~~~~~} \phi(p) &= (\phi_1(p), \phi_2(p)),\\ 
		\intertext{\hspace{4.3cm}where ~~~$\phi_1(p)= \big ( [ln(p_1) + 3, ~ln(p_1) + 5];$} 
		\intertext{\hspace{4.5cm}and ~~~~$\phi_2(p)=[p_1^2 + p_2^2 + 2,~ p_1^2 + p_2^2 + 7] \big ),$}
		\text{subject to,~~~} \psi_1(p) &= ln(p_1) + \sqrt{p_2} -1 \leqq 0,\\
		\psi_2(p) &= -ln(p_1)\leqq 0,\\
		\psi_3(p)&=p_1+p_2-2 \leqq 0,\\
		\psi_4(p)&=p_1^2p_2 -7 \leqq 0,\\
		\psi_5(p)&=-ln(p_2) \leqq 0.
	\end{align*}
	It is easy to see that $\phi_1(p)$ and $\phi_2(p)$ are LU-convex functions and gH-directionally differentiable on $H$ but $\phi_1(p)$ fails to be an LU-convex function in the usual sense. The functions $\psi_u(x)$, $u \in \{1,2,3,4,5\}$, are directionally differentiable convex functions on M but $\psi_4(p)$ is not a convex functions in the usual sense.\\
	Now, for $\bar{p} = (1,1)$ and for any $p=(p_1,p_2) \in H$, we have the following
	\begin{align*}
		(\phi_1^L)'(\bar{p}; ~ exp_{\bar{p}}^{-1} ~p) &= ln(p_1),\\
		(\phi_1^U)'(\bar{p}; ~ exp_{\bar{p}}^{-1} ~p) &= ln(p_1),\\
		(\phi_2^L)'(\bar{p}; ~ exp_{\bar{p}}^{-1} ~p) &= 2ln(p_1) +2ln(p_2),\\
		(\phi_2^U)'(\bar{p}; ~ exp_{\bar{p}}^{-1} ~p) &= 2ln(p_1) +2ln(p_2),\\
		\psi_1'(\bar{p}; ~ exp_{\bar{p}}^{-1} ~p) &= ln(p_1) + \frac{1}{2}ln(p_2),\\
		\psi_2'(\bar{p}; ~ exp_{\bar{p}}^{-1} ~p) &= -ln(p_1),\\
		\psi_3'(\bar{p}; ~ exp_{\bar{p}}^{-1} ~p) &= ln(p_1) + ln(p_2),\\
		\psi_4'(\bar{p}; ~ exp_{\bar{p}}^{-1} ~p) &= 2ln(p_1) + ln(p_2),\\
		\psi_5'(\bar{p}; ~ exp_{\bar{p}}^{-1} ~p) &= -ln(p_2).
	\end{align*}
	It is easy to check that conditions (i) and (ii) in Theorem \ref{theorem3.2,p1}(a) hold at $\bar{p} = (1,1)$ with the Lagrangian multipliers as follows:
	 $$(\lambda_1^L, \lambda_1^U, \lambda_2^L, \lambda_2^U, \mu_1, \mu_2, \mu_3, \mu_4, \mu_5)=(1,1,1,1,1,7,0,0, \frac{9}{2}).$$ 
	 So, by Theorem \ref{theorem3.2,p1}(a), we conclude that $\bar{p} = (1,1)$ is a type-I POS of (MIVOP3).
\end{example}

The number of multipliers involved in Theorem \ref{theorem3.2,p1} can be reduced very easily, for this let's recall the decomposition introduced by Wu \cite{Wu1}. Assume that $t \geqq 2l$, $t$ and $l$ are positive integers. Then the set $\{1,2,...,t\}$ can be decomposed into $2l$ nonempty subsets $Q_i$ for $i \in \{1,2,...,2l\}$ such that $Q_i \cap Q_j = \emptyset$ for $i \neq j$ and $\cup_{i=1}^{2l} Q_i = \{1,2,...,t\}$. This decomposition is denoted by $Q_{t,2l} = \{Q_1, Q_2,...,Q_{2l}\}$.

\begin{theorem}\label{theorem3.3,p1}
	Let $\psi_u:E \rightarrow \mathbb{R}$ be convex on $E \subseteq H$, $u \in \{1,2,...,t\}$, and let $\bar{p} \in F = \{p \in E: \psi_u(p)\leqq 0\}$. Suppose, $\phi_s : E \rightarrow \mathbb{I}$ with $\phi_s(p) = [\phi_s^L(p), \phi_s^U(p)]$ are LU-convex and weakly directionally differentiable at $\bar{p}$ for $s \in \{1,2,...,l\}$. Further, assume $t \geqq 2l$ with a decomposition $Q_{t,2l}$. If, there exist, $0 \leqq \mu_u \in \mathbb{R}$, $u \in \{1,2,...,t\}$, such that:
	\begin{enumerate}[label=(\roman*)]
		\item $(\phi_s^L)'(\bar{p}; exp^{-1}_{\bar{p}} ~ p)$ +  $\displaystyle\sum_{u \in Q_s} \mu_u \psi_u'(\bar{p}; exp^{-1}_{\bar{p}} ~ p) \geqq 0;$
		\item $(\phi_s^U)'(\bar{p}; exp^{-1}_{\bar{p}} ~ p)$ +  $\displaystyle\sum_{u \in Q_{s+l}} \mu_u \psi_u'(\bar{p}; exp^{-1}_{\bar{p}} ~ p) \geqq 0;$
		\item $\mu_u\psi_u(\bar{p}) = 0$, ~ $u \in \{1,2,...,t\}$,
	\end{enumerate}
	then, $\bar{p}$ is a type-I POS of (MIVOP2).
\end{theorem}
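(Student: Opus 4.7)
The plan is to reduce the conclusion to $2l$ independent applications of the scalar KKT result Theorem~\ref{theorem3.1,p1}, one for each endpoint function $\phi_s^L$ or $\phi_s^U$ paired with its own subset of the constraints prescribed by the decomposition $Q_{t,2l}$. Once each endpoint function is shown to be minimized at $\bar{p}$ on a sub-feasible region that contains $F$, the definition of $<_{LU}$ on interval-valued tuples will quickly yield a contradiction if $\bar{p}$ were not a type-I POS.

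For the main step, fix $s \in \{1,\dots,l\}$. By Lemma~\ref{lemma2.2,p1} together with Definition~\ref{definition2.8,p1}, the LU-convexity and weak directional differentiability of $\phi_s$ at $\bar{p}$ imply that both $\phi_s^L$ and $\phi_s^U$ are convex and directionally differentiable at $\bar{p}$. Consider now the scalar real-valued optimization problem of minimizing $\phi_s^L$ subject only to $\psi_u(p)\leqq 0$ for $u \in Q_s$. The multipliers $\{\mu_u : u \in Q_s\}$ are nonnegative by assumption, satisfy $\mu_u\psi_u(\bar{p})=0$ by (iii), and hypothesis (i) supplies the required stationarity. Theorem~\ref{theorem3.1,p1} therefore gives
\[
\phi_s^L(\bar{p}) \leqq \phi_s^L(p) \qquad \text{for every } p \in E \text{ with } \psi_u(p) \leqq 0,\ u \in Q_s.
\]
Since $Q_s \subseteq \{1,\dots,t\}$, the full feasible region $F$ is contained in this relaxed sub-feasible region, so in particular $\phi_s^L(\bar{p}) \leqq \phi_s^L(p)$ for every $p \in F$. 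Exactly the same argument applied to $\phi_s^U$ with the index set $Q_{s+l}$ and hypothesis (ii) yields $\phi_s^U(\bar{p}) \leqq \phi_s^U(p)$ for every $p \in F$. This is available for each $s \in \{1,\dots,l\}$.

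Finally, suppose for contradiction that $\bar{p}$ is not a type-I POS. By Definition~\ref{definition2.5,p1}(i) there exists $\hat{p} \in F$ with $\phi(\hat{p}) <_{LU} \phi(\bar{p})$; unpacking this component by component via (\ref{equation1,p1}) yields $\phi_s^L(\hat{p}) \leqq \phi_s^L(\bar{p})$ and $\phi_s^U(\hat{p}) \leqq \phi_s^U(\bar{p})$ for every $s$, with at least one of these $2l$ scalar inequalities being strict at some index $c$. The minimization inequalities established in the previous paragraph, evaluated at $p=\hat{p}$, give the reverse bound in every one of these $2l$ comparisons, so each must in fact be an equality. This contradicts the strict inequality at index $c$, and hence $\bar{p}$ is a type-I POS of (MIVOP2).

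No single analytic step is delicate here; the entire argument is essentially a bookkeeping exercise that distributes the constraint multipliers across the $2l$ scalar endpoint sub-problems via $Q_{t,2l}$. The only point requiring attention is a careful reading of $<_{LU}$ on interval-valued $n$-tuples, so as to reduce the strict component to a strict scalar inequality in either $\phi_c^L$ or $\phi_c^U$, which then conflicts with the corresponding scalar minimization obtained from Theorem~\ref{theorem3.1,p1}.
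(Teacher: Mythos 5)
Your proof is correct, but it takes a genuinely different route from the paper's. The paper proves this theorem by \emph{aggregation}: it introduces arbitrary positive weights $\lambda_s^L,\lambda_s^U$, rescales the multipliers as $\bar{\mu}_u^L=\lambda_s^L\mu_u$ for $u\in Q_s$ and $\bar{\mu}_u^U=\lambda_s^U\mu_u$ for $u\in Q_{s+l}$, sums conditions (i) and (ii) over $s$, and observes that the resulting single inequality is exactly the hypothesis of Theorem~\ref{theorem3.2,p1}(a); the disjointness of the $Q_i$ guarantees that each constraint receives a single well-defined multiplier in the sum. You instead \emph{decompose}: you treat each of the $2l$ endpoint functions $\phi_s^L,\phi_s^U$ as its own scalar program with constraint set indexed by $Q_s$ (resp.\ $Q_{s+l}$), apply Theorem~\ref{theorem3.1,p1} to each, and use $F\subseteq F_s$ to conclude that $\bar{p}$ minimizes every endpoint function over $F$; type-I Pareto optimality then follows immediately from the definition of $<_{LU}$ on tuples. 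Both arguments are sound (the implicit quantification of (i)--(ii) over all $p\in E$, and the implicit directional differentiability of the $\psi_u$, are needed equally by both). The paper's route buys a shorter derivation that reuses the already-proved scalarization machinery of Theorem~\ref{theorem3.2,p1}(a); your route is more elementary and actually exposes a stronger intermediate fact, namely that under these hypotheses $\bar{p}$ is a componentwise (ideal) minimizer of all $\phi_s^L$ and $\phi_s^U$ over $F$, not merely a Pareto point, which makes transparent why the final contradiction with the strict component of $<_{LU}$ is immediate.
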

\begin{proof}
	Let $0 < \lambda_s^L, \lambda_s^U \in \mathbb{R}$. Then, for all $p \in F$, the conditions (i) and (ii) can be rewritten as
	\begin{align*}
		\hspace{-1.5cm}(a)~ \lambda_s^L(\phi_s^L)'(\bar{p}; exp^{-1}_{\bar{p}} ~ p) + \lambda_s^U(\phi_s^U)'(\bar{p}; exp^{-1}_{\bar{p}} ~ p) &+  \displaystyle\sum_{u \in Q_s} \bar{\mu}_u^L \psi_u'(\bar{p}; exp^{-1}_{\bar{p}} ~ p)~~~~~~~~~~~~~\\  &+ \displaystyle\sum_{u \in Q_{s+l}} \bar{\mu}_u^U \psi_u'(\bar{p}; exp^{-1}_{\bar{p}} ~ p) \geqq 0,~~s \in \{1,2,...,l\}.
	\end{align*}
	(b) $\bar{\mu}_u^L\psi_i(\bar{p}) = 0, ~~u \in Q_s$ and $\bar{\mu}_u^U \psi_u(\bar{p}) = 0, ~~u \in Q_{s+l}$, ~ $s \in \{1,2,...,l\}$,\\\\
	where $\bar{\mu}_u^L = \lambda_s^L \mu_u, ~~ u \in Q_s$ and $\bar{\mu}_u^U = \lambda_s^U \mu_u, ~~ u \in Q_{s+l}$.\\\\
	Taking summation over $s$ in (a), we have
	\begin{align*}
		\displaystyle \sum_{s=1}^{l} \lambda_s^L(\phi_s^L)'&(\bar{p}; exp^{-1}_{\bar{p}} ~ p) + \displaystyle \sum_{s=1}^{l} \lambda_s^U(\phi_s^U)'(\bar{p}; exp^{-1}_{\bar{p}} ~ p)\\ 
		&+ \displaystyle \sum_{s=1}^{l} \Big [\displaystyle\sum_{u \in Q_s} \bar{\mu}_u^L \psi_u'(\bar{p}; exp^{-1}_{\bar{p}} ~ p) + \displaystyle\sum_{u \in Q_{s+l}} \bar{\mu}_u^U \psi_u'(\bar{p}; exp^{-1}_{\bar{p}} ~ p)\Big ] \geqq 0.
	\end{align*}
	The result now follows by taking into consideration the Theorem \ref{theorem3.2,p1}(a) and the decomposition.
\end{proof}

\begin{remark}
	\rm Parts {\bf(b)} and {\bf (c)} of Theorem \ref{theorem3.2,p1} can also be used in the similar way for Theorem \ref{theorem3.3,p1}.
\end{remark}

Now, we consider the weaker condition of pseudo-convexity for the objective functions involved in the optimization problem (MIVOP2).
But, first we present an example of a function defined on a Hadamard manifold which is pseudo-convex function but fails to be a convex function. 

\begin{example}\label{example3.2,p1}
	\rm 	Let $H = \mathbb{R}^2_{++} :=\{(p_1,~p_2) \in \mathbb{R}^2 : p_1, ~p_2 >0\}$ be a Riemannian manifold with Riemannian metric as defined in Example \ref{example3.1,p1}.
	
	Define a function $\phi: H \rightarrow \mathbb{R}$ as follows
	$$\phi((p_1, p_2)) = \frac{(ln(p_1))^2 + (ln(p_2))^2}{1 + (ln(p_1))^2 + (ln(p_2))^2}.$$
	The geodesic $\gamma(t),~ t \in [0, \infty]$, emanating from $\gamma(0) =\bar{p}=(1,1)$ in the direction $\gamma'(0)=w=(w_1, w_2) \in \mathbb{R}^2$ is given by
	$$\gamma(t) = (e^{w_1t}, e^{w_2t}).$$
	First, we show $\phi$ is strictly pseudo-convex at $\bar{p}$ by using the contrapositive statement of Definition \ref{definition2.11,p1}(ii). From simple calculations, we have
	$$(\phi \circ \gamma)(t) = \frac{(w_1^2 + w_2^2)t^2}{1 + (w_1^2 + w_2^2)t^2}.$$
	The directional derivative of $\phi$ at $\bar{p} = (1,1)$ in the direction $w = exp^{-1}_{\bar{p}} p \in \mathbb{R}^2$,  $p = (p_1,p_2) \in H$, is given by
	$$\phi'(\bar{p};~w) = \frac{d}{dt}(\phi \circ \gamma)(t)|_{t=0}=0.$$
	Also, we have 
	$$\phi(\bar{p}) = 0 \text{   and   } \phi(p) = \frac{(ln(p_1))^2 + (ln(p_2))^2}{1 + (ln(p_1))^2 + (ln(p_2))^2}.$$
	Since, $\phi'(\bar{p};~ exp^{-1}_{\bar{p}} p) = 0$ for every $p \in H$ and $\phi(p) > \phi(\bar{p})$ for every $p (\neq \bar{p}) \in H$, we conclude from Definition \ref{definition2.11,p1} (ii) (using the contrapositive statement of Definition) that $\phi$ is strictly pseudo-convex at $\bar{p}=(1,1)$.
	
	Next, we show $\phi$ is not a convex function at $\bar{p}$. For this, we choose $\hat{p}=(e^2,e^2)$ and $t=\frac{1}{2}$. By simple calculations, we have
	$$(\phi \circ \gamma)(\frac{1}{2}) = 0.666, ~\phi(\bar{p}) = 0~ \text{and}~ \phi(\hat{p})=0.888.$$
	It is now easy to see the following
	$$(\phi \circ \gamma)(\frac{1}{2}) > \frac{1}{2}\phi(\bar{p}) + \frac{1}{2}\phi(\hat{p}).$$ 
	Hence, $\phi$ is not a convex function at $\bar{p}=(1,1).$  \\
\end{example}

Let $F$ be the nonempty feasible set and $\bar{p} \in \bar{F}$, where $\bar{F}$ is the closure of $F$ and let $p \in F$, then the {\it set of geodesic feasible directions} of $F$ at $p$ is given by
$$A_pE = \{w \in T_p(H): \exists ~ \bar{t}>0, ~\text{such that}~ exp_p~(\alpha w) \in E, ~ \text{for all} ~\alpha \in [0, \bar{t})\}.$$

\begin{lemma}\label{lemma3.1,p1}
	{\rm \cite{chen}} Let $\bar{p} \in E$ and $\psi_u$ be directionally differentiable functions at $\bar{p}$, for all $u \in \{1,2,...,t\} = \hat{U}$. Let $J(\bar{p}) = \{u \in \hat{U}: \psi_u(\bar{p}) = 0\}$ be the index set for all binding, or active constraints of the optimization problem (MIVOP2). Let the set $G_0$ be defined as 
	$$G_0 = \{w \in T_{\bar{p}}(H) : \psi_u'(\bar{p};w) < 0, ~ \text{for all} ~ u \in J(\bar{p})\}.$$
	If $\psi_u,~ u \in J(\bar{p}),$ are strictly pseudo-convex functions at $\bar{p}$, then $A_{\bar{p}}E = G_0$.
\end{lemma}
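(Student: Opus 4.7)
The plan is to prove the double inclusion $A_{\bar p}E \subseteq G_0$ and $G_0 \subseteq A_{\bar p}E$ separately. Throughout, I read the definition of $A_{\bar p}E$ as referring to feasible directions for the constraint set $F$ (so $\exp_{\bar p}(\alpha w)\in F$ for small $\alpha$), which is what the phrase ``geodesic feasible directions of $F$'' in the text indicates; I also restrict attention to nonzero directions, the zero vector being a trivial degenerate case. The key observation is that strict pseudo-convexity of the active constraints allows us to upgrade the ``$\leq 0$'' information one gets from membership in $F$ into a \emph{strict} sign on the directional derivative, which is exactly the distinction between $G_0$ and its closure.

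For the inclusion $A_{\bar p}E \subseteq G_0$, I take $w\in A_{\bar p}E$ with $w\ne 0$, so there exists $\bar t>0$ with $p_\alpha := \exp_{\bar p}(\alpha w)\in F$ for every $\alpha\in[0,\bar t)$. For each $u\in J(\bar p)$ this gives $\psi_u(p_\alpha)\leqq 0 = \psi_u(\bar p)$, and since $p_\alpha\neq\bar p$ for $\alpha>0$ small, the hypothesis that $\psi_u$ is strictly pseudo-convex at $\bar p$ (Definition \ref{definition2.11,p1}(ii)) yields
\[
\psi_u'(\bar p;\,\exp_{\bar p}^{-1}p_\alpha) = \psi_u'(\bar p;\,\alpha w)=\alpha\,\psi_u'(\bar p;w)<0,
\]
so $\psi_u'(\bar p;w)<0$. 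This holds for every active index $u$, hence $w\in G_0$.

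For the reverse inclusion $G_0\subseteq A_{\bar p}E$, I take $w\in G_0$ and produce a single $\bar t>0$ that keeps the geodesic inside $F$. Split the index set: for $u\in J(\bar p)$, since $\psi_u(\bar p)=0$ and $\psi_u'(\bar p;w)<0$, the very definition of the directional derivative gives $\psi_u(\exp_{\bar p}(\alpha w)) = \alpha\,\psi_u'(\bar p;w) + o(\alpha)$ as $\alpha\to 0^+$, so there is $\delta_u>0$ with $\psi_u(\exp_{\bar p}(\alpha w))<0$ for $\alpha\in(0,\delta_u)$. For $u\notin J(\bar p)$ we have $\psi_u(\bar p)<0$; directional differentiability of $\psi_u$ at $\bar p$ in the direction $w$ forces continuity of $\alpha\mapsto \psi_u(\exp_{\bar p}(\alpha w))$ at $\alpha=0$, giving $\delta_u>0$ with $\psi_u(\exp_{\bar p}(\alpha w))<0$ for $\alpha\in[0,\delta_u)$. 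Taking $\bar t=\min_u \delta_u>0$, the curve $\alpha\mapsto \exp_{\bar p}(\alpha w)$ satisfies all $t$ constraints on $[0,\bar t)$ and therefore lies in $F$, so $w\in A_{\bar p}E$.

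The only delicate step is the first inclusion: without strict pseudo-convexity one only obtains $\psi_u'(\bar p;w)\leqq 0$ by a first-order argument on $\psi_u(p_\alpha)-\psi_u(\bar p)\leqq 0$, which would merely place $w$ in the closure of $G_0$ and not in $G_0$ itself. Upgrading to the strict inequality is precisely where Definition \ref{definition2.11,p1}(ii) is indispensable, and this is the one point where the hypothesis of the lemma is actually used; the second inclusion needs only the directional differentiability of the $\psi_u$'s.
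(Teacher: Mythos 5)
Your argument is correct. Note first that the paper itself does not prove this lemma: it is quoted from \cite{chen} without proof, so there is no in-paper argument to compare against; your double-inclusion proof is the standard one and it goes through. Your two interpretive choices are the right ones. The displayed definition of $A_{\bar p}E$ literally says $\exp_{\bar p}(\alpha w)\in E$, which would make $A_{\bar p}E$ all of $T_{\bar p}(H)$ since $E$ is open; reading it as membership in the feasible set $F$ is the only way the lemma can hold, and is what the surrounding text (``geodesic feasible directions of $F$'') intends. Likewise, excluding $w=0$ is necessary: $0\in A_{\bar p}E$ always, but $\psi_u'(\bar p;0)=0$ is not $<0$, so $0\notin G_0$ whenever $J(\bar p)\neq\emptyset$; this is a defect of the statement as imported from \cite{chen}, not of your proof, and you flag it appropriately (it is also the same issue that forces $p\neq\bar p$ in Definition \ref{definition2.11,p1}(ii)). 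The substantive steps are all sound: positive homogeneity $\psi_u'(\bar p;\alpha w)=\alpha\,\psi_u'(\bar p;w)$ together with $\exp_{\bar p}^{-1}p_\alpha=\alpha w$ (valid since $\exp_{\bar p}$ is a diffeomorphism on a Hadamard manifold) converts strict pseudo-convexity into the strict sign on $\psi_u'(\bar p;w)$, and the first-order expansion plus right-continuity handles the converse. The only cosmetic omission is that membership in $F$ also requires $\exp_{\bar p}(\alpha w)\in E$, which holds for small $\alpha$ because $E$ is open; you may want to add that half-sentence in the second inclusion.
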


%	\begin{theorem}
	%		{\rm \cite{chen}} Let $H$ be a Hadamard manifold, $E \subseteq H$ be an open geodesic convex set, let $\phi : E \rightarrow \mathbb{R}$ be a convex function. Then either
	%		$$\phi(p)<0 ~~ \text{has a solution for}~ p \in E,$$
	%		or~~~ there exists $z \in \mathbb{R}^m$, such that,
	%		$$z \geqq 0, ~ z^t \phi(p) \geqq 0, ~~ \text{for all}~~ p \in E,$$
	%		but never both.\\
	%	\end{theorem}

\noindent In view of Lemma \ref{example3.1,p1}, we now prove the following result which involves the weaker condition of pseudo-convexity for the objective functions in (MIVOP2).

\begin{theorem}\label{theorem3.4,p1}
	Let $\bar{p} \in F$ and assume that the real valued constraints  $\psi_u, ~ u \in J(\bar{p}),$ of problem (MIVOP2) be strictly pseudo-convex functions at $\bar{p}$. Let $\phi_s : E \rightarrow \mathbb{I}$ with $\phi_s(p) = [\phi_s^L(p), \phi_s^U(p)]$, $s \in \{1,2,...,l\}$, defined on an open geodesic convex set $E$, be weakly directionally differentiable at $\bar{p}$.
	\begin{enumerate}[label=\bf(\alph*)]
		\item If $\phi_s$, $s \in \{1,2,...,l\}$, are LU-strictly pseudo-convex functions at $\bar{p}$ and for every feasible point $p$, there exist,~ $0 < \lambda_s^L,~ \lambda_s^U \in \mathbb{R}$, $s \in \{1,2,...,l\}$, and $0 \leqq \mu_u^L,~ \mu_u^U \in \mathbb{R}$, $u \in \{1,2,...,t\}$, such that:
		\begin{enumerate}[label=(\roman*)]
			\item $\displaystyle \sum_{s=1}^{l} \lambda_s^L (\phi_s^L)'(\bar{p}; exp^{-1}_{\bar{p}} ~ p)$ + $\displaystyle\sum_{u=1}^{t} \mu_u^L \psi_u'(\bar{p}; exp^{-1}_{\bar{p}} ~ p) \geqq 0;$
			\item $\displaystyle \sum_{s=1}^{l} \lambda_s^U (\phi_s^U)'(\bar{p}; exp^{-1}_{\bar{p}} ~ p)$ + $\displaystyle\sum_{u=1}^{t} \mu_u^U \psi_u'(\bar{p}; exp^{-1}_{\bar{p}} ~ p) \geqq 0;$
			\item $\mu_u^L\psi_u(\bar{p}) = 0 = \mu_u^U\psi_u(\bar{p})$, $u \in \{1,2,...,t\}$,
		\end{enumerate}
	\end{enumerate}
	then, $\bar{p}$ is a type-I POS of (MIVOP2).
	\begin{enumerate}[resume*]
		\item If $\phi_s$, $s \in \{1,2,...,l\}$, are CW-strictly pseudo-convex functions at $\bar{p}$ and for every feasible point $p$, there exist, $0 < \lambda_s^C,~ \lambda_s^W \in \mathbb{R}$, $s \in \{1,2,...,l\}$, and $0 \leqq \mu_u^C,~ \mu_u^W \in \mathbb{R}$, $u \in \{1,2,...,t\}$, such that:
		\begin{enumerate}[label=(\roman*)]
			\item[(iv)] $\displaystyle \sum_{s=1}^{l} \lambda_s^C (\phi_s^C)'(\bar{p}; exp^{-1}_{\bar{p}} ~ p)$ + $\displaystyle\sum_{u=1}^{t} \mu_u^C \psi_u'(\bar{p}; exp^{-1}_{\bar{p}} ~ p) \geqq 0;$
			\item[(v)] $\displaystyle \sum_{s=1}^{l} \lambda_s^W (\phi_s^W)'(\bar{p}; exp^{-1}_{\bar{p}} ~ p)$ + $\displaystyle\sum_{u=1}^{t} \mu_u^W \psi_u'(\bar{p}; exp^{-1}_{\bar{p}} ~ p) \geqq 0;$
			\item[(vi)] $\mu_u^C\psi_u(\bar{p}) = 0 = \mu_u^W\psi_u(\bar{p})$, $u \in \{1,2,...,t\}$,
		\end{enumerate}
	\end{enumerate}
	then, $\bar{p}$ is a type-II POS of (MIVOP2).
\end{theorem}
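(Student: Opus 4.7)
The plan is to argue by contradiction for each part, paralleling the structure of the proof of Theorem \ref{theorem3.2,p1} but with two changes: strict pseudo-convexity is exploited pointwise (so no convex combination of objectives is needed to reduce to a scalar problem), and Lemma \ref{lemma3.1,p1} is the new tool used to extract the sign of the directional derivatives of the active constraints.

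For part (a), I would suppose $\bar{p}$ is not a type-I POS and pull out $\hat{p} \in F$ with $\phi(\hat{p}) <_{LU} \phi(\bar{p})$. Unwinding the $n$-tuple $<_{LU}$ ordering introduced before Definition \ref{definition2.5,p1} yields $\phi_s^L(\hat{p}) \leqq \phi_s^L(\bar{p})$ and $\phi_s^U(\hat{p}) \leqq \phi_s^U(\bar{p})$ for every $s \in \{1,\ldots,l\}$. By Definition \ref{definition2.12,p1}(i), LU-strict pseudo-convexity of each $\phi_s$ at $\bar{p}$ is equivalent to strict pseudo-convexity of $\phi_s^L$ and $\phi_s^U$ at $\bar{p}$, so Definition \ref{definition2.11,p1}(ii) applied with $w := exp^{-1}_{\bar{p}} \hat{p}$ gives $(\phi_s^L)'(\bar{p}; w) < 0$ and $(\phi_s^U)'(\bar{p}; w) < 0$ for every $s$. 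Since $\lambda_s^L, \lambda_s^U > 0$, summing produces $\sum_s \lambda_s^L (\phi_s^L)'(\bar{p}; w) < 0$ and the analogous strict inequality for the $U$-sum.

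For the constraint piece, Lemma \ref{lemma3.1,p1} applies because the active constraints $\psi_u$, $u \in J(\bar{p})$, are strictly pseudo-convex at $\bar{p}$. I would check that $w$ is a geodesic feasible direction at $\bar{p}$ (active constraints decrease initially by strict pseudo-convexity, inactive ones stay negative by continuity along the geodesic toward $\hat{p}$); the lemma then yields $\psi_u'(\bar{p}; w) < 0$ for every $u \in J(\bar{p})$. For every $u \notin J(\bar{p})$ one has $\psi_u(\bar{p}) < 0$, so condition (iii) forces $\mu_u^L = \mu_u^U = 0$. Combining $\mu_u^L \geqq 0$ with $\psi_u'(\bar{p}; w) < 0$ on $J(\bar{p})$ gives $\sum_u \mu_u^L \psi_u'(\bar{p}; w) \leqq 0$, so the left-hand side of (i) is strictly negative, contradicting (i). Part (b) follows the same template: Definition \ref{definition2.12,p1}(ii) converts CW-strict pseudo-convexity into strict pseudo-convexity of $\phi_s^C$ and $\phi_s^W$; violating the type-II POS property produces $\hat{p} \in F$ with $\phi_s^C(\hat{p}) \leqq \phi_s^C(\bar{p})$ and $\phi_s^W(\hat{p}) \leqq \phi_s^W(\bar{p})$ for every $s$; the same pseudo-convex descent on the objective side together with Lemma \ref{lemma3.1,p1} and the complementary slackness condition (vi) on the constraint side yield a strict inequality contradicting (iv).

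The main obstacle, as I see it, is the bookkeeping that confirms $w = exp^{-1}_{\bar{p}} \hat{p}$ is genuinely a geodesic feasible direction so that Lemma \ref{lemma3.1,p1} may be invoked; this is not immediate from $\hat{p} \in F$, since the feasible set need not be geodesic convex in general. It rests on the strict pseudo-convexity of the active constraints, which forces them to decrease along the initial portion of the geodesic from $\bar{p}$ toward $\hat{p}$. Once that point is secured, all that remains is routine sign tracking and complementary slackness, and the same architecture services both parts of the theorem.
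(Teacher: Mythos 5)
Your proof is correct and follows essentially the same route as the paper: argue by contradiction, use strict pseudo-convexity of $\phi_s^L$ and $\phi_s^U$ (via Definition \ref{definition2.12,p1}) to make every objective directional derivative at $\bar p$ in the direction $exp^{-1}_{\bar p}\,\hat p$ strictly negative, use Lemma \ref{lemma3.1,p1} together with complementary slackness to make the constraint sum nonpositive, and contradict condition (i). The only cosmetic differences are that the paper obtains $w\in A_{\bar p}E$ immediately from geodesic convexity of $E$ and isolates a single index $c$ with a ``without loss of generality'' step, whereas you treat all indices uniformly (cleaner, since strict pseudo-convexity is assumed for every $s$) and verify feasibility of the direction by hand --- note that once you observe $\psi_u(\hat p)\leqq 0=\psi_u(\bar p)$ for $u\in J(\bar p)$, strict pseudo-convexity of $\psi_u$ already yields $\psi_u'(\bar p;\,exp^{-1}_{\bar p}\,\hat p)<0$ directly, so your detour through Lemma \ref{lemma3.1,p1} is harmless but redundant.
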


\begin{proof}
	Suppose, on contrary, $\bar{p}$ be not a type-I POS of (MIVOP2), then, there exists $\hat{p} \neq \bar{p} \in F$, such that $\phi(\hat{p}) <_{LU} \phi(\bar{p})$, i.e., there exists $c \in \{1,2,...,l\}$ such that $\phi_c(\hat{p}) <_{LU} \phi_c(\bar{p})$ $\Rightarrow$ $\phi_c^L(\hat{p}) < \phi_c^L(\bar{p})$ or $\phi_c^U(\hat{p}) < \phi_c^U(\bar{p})$.\\
	Since, $\phi_c$ is LU-strictly pseudo-convex function at $\bar{p}$, from Definition \ref{definition2.12,p1}(i), $\phi_c^L$ and $\phi_c^U$ are strictly pseudo-convex functions, hence pseudo-convex functions. From definition \ref{definition2.11,p1}(i), we have
	$$(\phi_c^L)'(\bar{p}; exp^{-1}_{\bar{p}} ~ \hat{p}) < 0 ~~~ \text{or} ~~~ (\phi_c^U)'(\bar{p}; exp^{-1}_{\bar{p}} ~ \hat{p}) < 0.$$
	Without the loss of generality, we suppose 
	\begin{equation}\label{equation7,p1}
		(\phi_c^L)'(\bar{p}; exp^{-1}_{\bar{p}} ~ \hat{p}) < 0.
	\end{equation}
	For $s \neq c$,
	$$\phi_s^L(\hat{p}) < \phi_s^L(\bar{p}) ~~ \text{or} ~~ \phi_s^L(\hat{p}) \leqq \phi_s^L(\bar{p})$$
	both of which by Definition \ref{definition2.11,p1}, yield
	\begin{equation}\label{equation8,p1}
		(\phi_s^L)'(\bar{p}; exp^{-1}_{\bar{p}} ~ \hat{p}) < 0.
	\end{equation}
	From equations (\ref{equation7,p1}) and (\ref{equation8,p1}), we have 
	\begin{equation}\label{equation9,p1}
		(\phi_s^L)'(\bar{p}; exp^{-1}_{\bar{p}} ~ \hat{p}) < 0, ~ s \in \{1,2,...,l\}.
	\end{equation}
	For, $0 < \lambda_s^L \in \mathbb{R}$, $s \in \{1,2,...,l\}$, the system (\ref{equation9,p1}) yields\\
	\begin{equation}\label{equation10,p1}
		\displaystyle \sum_{s=1}^{l} \lambda_s^L (\phi_s^L)'(\bar{p}; exp^{-1}_{\bar{p}} ~ \hat{p}) < 0.
	\end{equation}
	Let $w = exp^{-1}_{\bar{p}} ~ \hat{p}$. Since, $E$ is geodesic convex set, $p = exp_{\bar{p}}~\alpha w \in E$, for $\alpha \in (0,1)$, which shows that $w \in A_{\bar{p}} E$. From Lemma \ref{lemma3.1,p1}, we get 
	$$\psi_u'(\bar{p}; ~w) < 0, ~~~ u \in J(\bar{p}).$$
	\begin{equation}\label{equation11,p1}
		\text{i.e.,}~~~~~	\psi_u'(\bar{p}; exp^{-1}_{\bar{p}} ~ \hat{p}) < 0, ~~~ u \in J(\bar{p}).
	\end{equation}
	For,  $0 \leqq \mu_u^L \in \mathbb{R}$ and some $\mu_u^L \neq 0$, $u \in J(\bar{p})$, the system (\ref{equation11,p1}) yields
	\begin{equation}\label{equation12,p1}
		\displaystyle\sum_{u \in J(\bar{p})} \mu_u^L \psi_u'(\bar{p}; exp^{-1}_{\bar{p}} ~ \hat{p}) < 0.
	\end{equation}
	Note that,
	\begin{equation}\label{equation13,p1}
		\displaystyle\sum_{u \in J(\bar{p})} \mu_u^L \psi_u'(\bar{p}; exp^{-1}_{\bar{p}} ~ \hat{p}) = 	\displaystyle\sum_{u=1}^{t} \mu_u^L \psi_u'(\bar{p}; exp^{-1}_{\bar{p}} ~ \hat{p}), 
	\end{equation}
	with $\mu_u^L \psi_i(\bar{p}) = 0; ~$ $u \in \{1,2,...,t\}$.\\
	From inequalities (\ref{equation10,p1}) and (\ref{equation12,p1}), and equation (\ref{equation13,p1}), we have
	$$\displaystyle \sum_{s=1}^{l} \lambda_s^L (\phi_s^L)'(\bar{p}; exp^{-1}_{\bar{p}} ~ p) + \displaystyle\sum_{u=1}^{t} \mu_u^L \psi_u'(\bar{p}; exp^{-1}_{\bar{p}} ~ p) < 0,$$
	which violates condition (i).
	This proves {\bf (a)}.\\
	Part {\bf (b)} can similarly be proved as that of {\bf (a)}. 
\end{proof}

\noindent The following example is in support of Theorem \ref{theorem3.4,p1}(a) which involves the weaker condition of pseudo-convexity for the objective functions in (MIVOP2). 

\begin{example}
	\rm 	Let $H = \mathbb{R}^2_{++} :=\{(p_1,~p_2) \in \mathbb{R}^2 : p_1, ~p_2 >0\}$ be a Riemannian manifold with Riemannian metric as defined in Example \ref{example3.1,p1}.
	
	Let $\phi_s: H \rightarrow \mathbb{I}$, $s \in \{1,2\}$, be interval-valued functions and $\psi_u: H \rightarrow \mathbb{R}$, $u \in \{1,2,3,4,5\}$, be the real valued constraint functions for the following.
	\begin{align*}
		\text{Minimize~~~~~} \phi(p) &= (\phi_1(p),~ \phi_2(p)), \hspace{4.7cm}\text{(MIVOP4)}\\
		\text{where }~~~ \phi_1(p) &= [\frac{(ln(p_1))^2 + (ln(p_2))^2}{1 + (ln(p_1))^2 + (ln(p_2))^2}, \frac{(ln(p_1))^2 + (ln(p_2))^2}{1 + (ln(p_1))^2 + (ln(p_2))^2} + 1],\\
		\text{and}~~~~ \phi_2(p) &= [(ln(p_1))^2 + p_2, ~ (ln(p_1))^2 + p_2+1].\\
		\text{subject to,~~~} \psi_1(p) &= (ln(p_1))^2 + (ln(p_2))^2 -1 \leqq 0,\\
		\psi_2(p) &= p_1(ln(p_2))^2 + p_2(ln(p_1))^2 - 3\leqq 0,\\
		\psi_3(p)&= \frac{1}{p_1} + (ln(p_2))^2 -1 \leqq 0,\\
		\psi_4(p)&=p_1^2 + p_2^2 - 2 \leqq 0,\\
		\psi_5(p)&= (ln(p_1))^2 + \frac{1}{p_2} -1 \leqq 0.
	\end{align*}
	Here, both the objective functions $\phi_1(p)$ and $\phi_2(p)$ are weakly directionally differentiable and strictly LU-pseudo-convex at $\bar{p}=(1,1)$ and $\phi_1(p)$ is not LU-convex at $\bar{p}=(1,1)$. Also, the constraints $\psi_u(p)$, $u \in \{1,2,3,4,5\}$ are all strictly pseudo-convex functions at $\bar{p}=(1,1)$.
	
	The directional derivatives of the corresponding functions involved in the optimization problem at $\bar{p}=(1,1)$ in any direction $w= exp^{-1}_{\bar{p}} ~p \in \mathbb{R}^2$, $p=(p_1,p_2)\in H$, are given as follows
	\begin{align*}
		(\phi_1^L)'(\bar{p}; ~ exp_{\bar{p}}^{-1} ~p) &= 0,\\
		(\phi_1^U)'(\bar{p}; ~ exp_{\bar{p}}^{-1} ~p) &= 0,\\
		(\phi_2^L)'(\bar{p}; ~ exp_{\bar{p}}^{-1} ~p) &= ln(p_2),\\
		(\phi_2^U)'(\bar{p}; ~ exp_{\bar{p}}^{-1} ~p) &= ln(p_2),\\
		\psi_1'(\bar{p}; ~ exp_{\bar{p}}^{-1} ~p) &= 0,\\
		\psi_2'(\bar{p}; ~ exp_{\bar{p}}^{-1} ~p) &= 0,\\
		\psi_3'(\bar{p}; ~ exp_{\bar{p}}^{-1} ~p) &= -ln(p_1),\\
		\psi_4'(\bar{p}; ~ exp_{\bar{p}}^{-1} ~p) &= 2ln(p_1) + 2ln(p_2),\\
		\psi_5'(\bar{p}; ~ exp_{\bar{p}}^{-1} ~p) &= -ln(p_2).
	\end{align*}
	It is easy to check that conditions (i), (ii) and (iii) in Theorem \ref{theorem3.4,p1}(a) hold at $\bar{p} = (1,1)$ with $(\lambda_1^L, \lambda_2^L, \mu_1^L, \mu_2^L, \mu_3^L, \mu_4^L, \mu_5^L) =(\lambda_1^U, \lambda_2^U, \mu_1^U, \mu_2^U, \mu_3^U, \mu_4^U, \mu_5^U)=(1,1,0,0,2,1,3)$. So, by Theorem \ref{theorem3.4,p1}(a), we conclude that $\bar{p} = (1,1)$ is a type-I POS of (MIVOP4).
\end{example}

\subsection{KKT Conditions for Weakly Type-I and Type-II POS}
We now address certain KKT conditions for weakly type-I POS by using the concepts of LU-convexity and LU-pseudo-convexity as well as few conditions for weakly type-II POS by using CW-covexity and  CW-pseudo-convexity for (MIVOP2).

\begin{theorem}\label{theorem3.5,p1}
	Let $\psi_u : E \rightarrow \mathbb{R}$, $u \in \{1,2,...,t\}$, be convex at $\bar{p} \in F$;
	\begin{enumerate}[label=\bf(\alph*)]
		\item If, there exists $c \in \{1,2,...,l\}$, such that $\phi_c : E \rightarrow \mathbb{I}$ with $\phi_c(p) = [\phi_c^L(p), \phi_c^U(p)]$ is LU-convex and weakly directionally differentiable at $\bar{p}$ and if, there exist, $0 < \lambda^L,~ \lambda^U \in \mathbb{R}$ and $0 \leqq \mu_u \in \mathbb{R}$, $u \in \{1,2,...,t\}$, such that:
		\begin{enumerate}[label=(\roman*)]
			\item $\lambda^L (\phi_c^L)'(\bar{p};~ exp^{-1}_{\bar{p}} ~ p) + \lambda^U (\phi_c^U)'(\bar{p};~ exp^{-1}_{\bar{p}} ~ p) + \displaystyle\sum_{u=1}^{t} \mu_u \psi_u'(\bar{p};~ exp^{-1}_{\bar{p}} ~ p) \geqq 0;$
			\item $\mu_u\psi_u(\bar{p}) = 0$, ~ $u \in \{1,2,...,t\}$,
		\end{enumerate}
	\end{enumerate}
	then, $\bar{p}$ is a weakly type-I POS of (MIVOP2).
	\begin{enumerate}[resume*]
		\item If, there exists $c \in \{1,2,...,l\}$, such that $\phi_c : E \rightarrow \mathbb{I}$ with $\phi_c(p) = [\phi_c^L(p), \phi_c^U(p)]$ is CW-convex and weakly directionally differentiable at $\bar{p}$ and if, there exist, $0 < \lambda^C,~ \lambda^W \in \mathbb{R}$ and $0 \leqq \mu_u \in \mathbb{R}$, $u \in \{1,2,...,t\}$, such that:
		\begin{enumerate}[label=(\roman*)]
			\item[(iii)] $\lambda^C (\phi_c^C)'(\bar{p};~ exp^{-1}_{\bar{p}} ~ p) + \lambda^W (\phi_c^W)'(\bar{p};~ exp^{-1}_{\bar{p}} ~ p) + \displaystyle\sum_{u=1}^{t} \mu_u \psi_u'(\bar{p};~ exp^{-1}_{\bar{p}} ~ p) \geqq 0;$
			\item[(iv)] $\mu_u\psi_u(\bar{p}) = 0$, ~ $u \in \{1,2,...,t\}$,
		\end{enumerate}
	\end{enumerate}
	then, $\bar{p}$ is a weakly type-II POS of (MIVOP2).
	\begin{enumerate}[resume*]
		\item If, there exists $c \in \{1,2,...,l\}$, such that $\phi_c : E \rightarrow \mathbb{I}$ with $\phi_c(p) = [\phi_c^L(p), \phi_c^U(p)]$ is CW-convex and weakly directionally differentiable at $\bar{p}$ and if, there exist, $0 < \lambda^L,~ \lambda^U \in \mathbb{R}$ and $0 \leqq \mu_u \in \mathbb{R}$, $u \in \{1,2,...,t\}$, such that:
		\begin{enumerate}[label=(\roman*)]
			\item[(v)] $\lambda^L < \lambda^U$;
			\item[(vi)] $\lambda^L (\phi_c^L)'(\bar{p};~ exp^{-1}_{\bar{p}} ~ p) + \lambda^U (\phi_c^U)'(\bar{p};~ exp^{-1}_{\bar{p}} ~ p) + \displaystyle\sum_{u=1}^{t} \mu_u \psi_u'(\bar{p};~ exp^{-1}_{\bar{p}} ~ p) \geqq 0;$
			\item[(vii)] $\mu_u\psi_u(\bar{p}) = 0$, ~ $u \in \{1,2,...,t\}$,
		\end{enumerate}
	\end{enumerate}
	then, $\bar{p}$ is a weakly type-II POS of (MIVOP2).
\end{theorem}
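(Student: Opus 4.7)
The plan is to mirror the three-part structure of the proof of Theorem \ref{theorem3.2,p1}, replacing the full sum over $s\in\{1,\dots,l\}$ by a single aggregator function built from the distinguished index $c$, and replacing the negation of type-I/II POS by the negation of weakly type-I/II POS (which demands a strict decrease in \emph{every} component, not just one). This weaker negation is exactly what lets us work with a scalarisation of the single objective $\phi_c$.

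For part \textbf{(a)}, I define
\[
\bar{\phi}(p) = \lambda^L \phi_c^L(p) + \lambda^U \phi_c^U(p).
\]
By the LU-convexity and weak directional differentiability of $\phi_c$ at $\bar{p}$, Lemma \ref{lemma2.2,p1} and Definition \ref{definition2.8,p1} give that $\phi_c^L$ and $\phi_c^U$ are convex and directionally differentiable at $\bar{p}$; then Lemma \ref{lemma2.3,p1} yields that $\bar{\phi}$ is convex and directionally differentiable at $\bar{p}$ (and one has $\bar{\phi}'(\bar{p};w) = \lambda^L(\phi_c^L)'(\bar{p};w) + \lambda^U(\phi_c^U)'(\bar{p};w)$ since directional derivatives are additive and positively homogeneous). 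Together with hypotheses (i) and (ii), Theorem \ref{theorem3.1,p1} then forces $\bar{\phi}(\bar{p}) \leqq \bar{\phi}(p)$ for every feasible $p$. Now suppose, for contradiction, that $\bar{p}$ is not a weakly type-I POS: there exists $\hat{p}\in F$ with $\phi_s(\hat{p}) <_{LU} \phi_s(\bar{p})$ for \emph{every} $s$, in particular for $s=c$. By (\ref{equation1,p1}) this yields $\phi_c^L(\hat{p}) \leqq \phi_c^L(\bar{p})$ and $\phi_c^U(\hat{p}) \leqq \phi_c^U(\bar{p})$ with at least one strict inequality, and since $\lambda^L,\lambda^U > 0$ we obtain $\bar{\phi}(\hat{p}) < \bar{\phi}(\bar{p})$, contradicting the optimality of $\bar{p}$ for $\bar{\phi}$.

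For part \textbf{(b)}, the argument is identical after replacing $\bar{\phi}$ by
\[
\bar{\phi}(p) = \lambda^C \phi_c^C(p) + \lambda^W \phi_c^W(p),
\]
invoking Lemma \ref{lemma2.6,p1} in place of Lemma \ref{lemma2.2,p1} to pass from CW-convexity of $\phi_c$ to convexity of $\phi_c^C$ and $\phi_c^W$, and using the CW analogue of (\ref{equation1,p1}) (which follows from $T<_{CW}S$ in the negation of Definition \ref{definition2.6,p1}(iii)) to conclude $\bar{\phi}(\hat{p}) < \bar{\phi}(\bar{p})$. Part \textbf{(c)} is then a bookkeeping reduction to \textbf{(b)}: using the identities $\phi_c^L = \phi_c^C - \phi_c^W$ and $\phi_c^U = \phi_c^C + \phi_c^W$, and setting $\lambda^C := \lambda^L + \lambda^U$ and $\lambda^W := \lambda^U - \lambda^L$, a direct rearrangement gives
\[
\lambda^L(\phi_c^L)'(\bar{p};\cdot) + \lambda^U(\phi_c^U)'(\bar{p};\cdot) = \lambda^C(\phi_c^C)'(\bar{p};\cdot) + \lambda^W(\phi_c^W)'(\bar{p};\cdot),
\]
and condition (v) guarantees $\lambda^C, \lambda^W > 0$, so hypothesis (vi) becomes hypothesis (iii) of \textbf{(b)} and the conclusion transfers.

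The only step that requires any thought is the contradiction step in \textbf{(a)} and \textbf{(b)}: one must verify that the negation of ``weakly type-I POS'' supplies \emph{enough} information about the single component $c$ to force strict decrease of the scalarisation $\bar{\phi}$. Unlike the type-I setting of Theorem \ref{theorem3.2,p1}, here we do not need a combination of strict and non-strict inequalities across all $s$; the definition of weakly type-I POS gives us $\phi_c(\hat{p}) <_{LU} \phi_c(\bar{p})$ directly, which by (\ref{equation1,p1}) is already sufficient (given $\lambda^L,\lambda^U>0$) to produce $\bar{\phi}(\hat{p}) < \bar{\phi}(\bar{p})$ without needing to control the other components $\phi_s$ for $s\neq c$. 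This is in fact why a single scalarised objective, rather than a sum over $s$, suffices here.
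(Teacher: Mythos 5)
Your proposal is correct and follows exactly the paper's route: the paper defines the same scalarisation $\bar{\phi}(p)=\lambda^L\phi_c^L(p)+\lambda^U\phi_c^U(p)$, invokes Lemma \ref{lemma2.2,p1} and Definition \ref{definition2.8,p1}, and then simply declares the rest "analogous to Theorem \ref{theorem3.2,p1}." You have filled in those omitted details (the application of Theorem \ref{theorem3.1,p1}, the contradiction via the negation of weak Pareto optimality using only the component $c$, and the $\lambda^C=\lambda^L+\lambda^U$, $\lambda^W=\lambda^U-\lambda^L$ reduction for part (c)) correctly.
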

\begin{proof}
	Define,\\ 
	$$\bar{\phi}(p) = \lambda^L \phi_c^L(p) + \lambda^U \phi_c^U(p).$$
	Since, $\phi_c(p)$ is an LU-convex function at $\bar{p}$. It follows from Lemma \ref{lemma2.2,p1} and Definition \ref{definition2.8,p1} that each $\phi_s^L(p)$ and $\phi_s^U(p)$ are convex functions and directionally differentiable at $\bar{p}$. The proof is now analogous to Theorem \ref{theorem3.2,p1}.
\end{proof}

\begin{theorem}\label{theorem3.6,p1}
	Let $\bar{p} \in F$ and assume $\psi_u:E\rightarrow \mathbb{R}$, $u \in J(\bar{p}),$ of (MIVOP2) be strictly pseudo-convex at $\bar{p}$. If, there exists $c \in \{1,2,...,l\}$, such that, $\phi_c : E \rightarrow \mathbb{I}$ with $\phi_c(p) = [\phi_c^L(p), \phi_c^U(p)]$, defined on an open geodesic convex set $E$, be weakly directionally differentiable at $\bar{p}$;
	\begin{enumerate}[label=\bf(\alph*)]
		\item If $\phi_c$ is an LU-pseudo-convex function at $\bar{p}$ and for every $p\in F$, there exist, $0 \leqq \mu_u^L,~ \mu_u^U \in \mathbb{R}$, $u \in \{1,2,...,t\}$, such that:
		\begin{enumerate}[label=(\roman*)]
			\item $(\phi_c^L)'(\bar{p}; ~exp^{-1}_{\bar{p}} ~ p)$ + $\displaystyle\sum_{u=1}^{t} \mu_u^L \psi_u'(\bar{p}; ~exp^{-1}_{\bar{p}} ~ p) \geqq 0;$
			\item $(\phi_c^U)'(\bar{p}; ~exp^{-1}_{\bar{p}} ~ p)$ + $\displaystyle\sum_{u=1}^{t} \mu_u^U \psi_u'(\bar{p}; ~exp^{-1}_{\bar{p}} ~ p) \geqq 0;$
			\item $\mu_u^L\psi_u(\bar{p}) = 0 = \mu_u^U\psi_u(\bar{p})$,  $u \in \{1,2,...,t\}$,
		\end{enumerate}
	\end{enumerate}
	then, $\bar{p}$ is a weakly type-I POS of (MIVOP2).
	\begin{enumerate}[resume*]
		\item If $\phi_c$ is CW-pseudo-convex function at $\bar{p}$ and for every $p\in F$, there exist, $0 \leqq \mu_u^C,~ \mu_u^W \in \mathbb{R}$, $u \in \{1,2,...,t\}$, such that:
		\begin{enumerate}[label=(\roman*)]
			\item[(iv)] $(\phi_c^C)'(\bar{p}; ~exp^{-1}_{\bar{p}} ~ p)$ + $\displaystyle\sum_{u=1}^{t} \mu_u^C \psi_u'(\bar{p}; ~exp^{-1}_{\bar{p}} ~ p) \geqq 0;$
			\item[(v)] $(\phi_c^W)'(\bar{p}; ~exp^{-1}_{\bar{p}} ~ p)$ + $\displaystyle\sum_{u=1}^{t} \mu_u^W \psi_u'(\bar{p}; ~exp^{-1}_{\bar{p}} ~ p) \geqq 0;$
			\item[(vi)] $\mu_u^C\psi_u(\bar{p}) = 0 = \mu_u^W\psi_u(\bar{p})$,  $u \in \{1,2,...,t\}$,
		\end{enumerate}
	\end{enumerate}
	then, $\bar{p}$ is a weakly type-II POS of (MIVOP2).
\end{theorem}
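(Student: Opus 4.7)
The plan is to prove both parts by contradiction, mirroring the argument of Theorem \ref{theorem3.4,p1} but now exploiting the existence of a single distinguished index $c$ rather than requiring a sum over all objectives. I would treat part {\bf (a)} in detail and then indicate that part {\bf (b)} is obtained by parallel substitutions; because the conditions (i) and (ii) are stated as two independent inequalities (one for the lower endpoint, one for the upper), the proof can contradict whichever of them is violated, which is exactly what is needed when only one strict endpoint inequality is available.

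For part {\bf (a)}, suppose $\bar{p}$ is not a weakly type-I POS. Definition \ref{definition2.5,p1}(iii) yields $\hat{p}\in F$ with $\phi_s(\hat{p}) <_{LU} \phi_s(\bar{p})$ for every $s$, and in particular for $s=c$. The case analysis (\ref{equation1,p1}) forces at least one of $\phi_c^L(\hat{p}) < \phi_c^L(\bar{p})$ or $\phi_c^U(\hat{p}) < \phi_c^U(\bar{p})$. Since $\phi_c$ is LU-pseudo-convex at $\bar{p}$, Definition \ref{definition2.12,p1}(i) transfers pseudo-convexity to $\phi_c^L$ and $\phi_c^U$, so Definition \ref{definition2.11,p1}(i) converts the chosen strict inequality into either $(\phi_c^L)'(\bar{p};\exp_{\bar{p}}^{-1}\hat{p}) < 0$ or $(\phi_c^U)'(\bar{p};\exp_{\bar{p}}^{-1}\hat{p}) < 0$.

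Setting $w=\exp_{\bar{p}}^{-1}\hat{p}$, geodesic convexity of $E$ puts $w\in A_{\bar{p}}E$, so Lemma \ref{lemma3.1,p1} combined with the strict pseudo-convexity hypothesis on the active constraints delivers $\psi_u'(\bar{p};w) < 0$ for every $u\in J(\bar{p})$. Inactive indices satisfy $\psi_u(\bar{p}) < 0$ and hence, by complementary slackness (iii), force the corresponding $\mu_u^L$ and $\mu_u^U$ to vanish, so $\sum_{u=1}^{t}\mu_u^L\psi_u'(\bar{p};w)\leqq 0$ and $\sum_{u=1}^{t}\mu_u^U\psi_u'(\bar{p};w)\leqq 0$. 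In the first subcase, adding the $\mu_u^L$-sum to $(\phi_c^L)'(\bar{p};w)$ contradicts (i); in the second subcase the analogous addition contradicts (ii). Either way $\bar{p}$ is forced to be a weakly type-I POS.

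Part {\bf (b)} is handled identically after replacing $\phi_c^L,\phi_c^U$ by $\phi_c^C,\phi_c^W$, using Lemma \ref{lemma2.6,p1} and Definition \ref{definition2.12,p1}(ii) in place of their LU analogues, and invoking conditions (iv)--(vi); the relation $\phi_c(\hat{p}) <_{CW} \phi_c(\bar{p})$ again splits into at least one of $\phi_c^C(\hat{p}) < \phi_c^C(\bar{p})$ or $\phi_c^W(\hat{p}) < \phi_c^W(\bar{p})$, yielding the required strictly negative directional derivative. The main obstacle, as I see it, is not any single estimate but the bookkeeping that, unlike Theorem \ref{theorem3.4,p1}, we lack a sum over all objectives and must ensure that exactly one of the two parallel KKT inequalities is the one contradicted; the dichotomy built into (\ref{equation1,p1}) together with the separation of (i) from (ii) is what makes this work cleanly.
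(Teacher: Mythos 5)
Your proof is correct and follows essentially the same route as the paper: negate the weak Pareto condition to get $\phi_c(\hat{p}) <_{LU} \phi_c(\bar{p})$, use Definitions \ref{definition2.12,p1} and \ref{definition2.11,p1} to obtain a strictly negative directional derivative for $\phi_c^L$ or $\phi_c^U$, and then run the Lemma \ref{lemma3.1,p1} / complementary-slackness argument of Theorem \ref{theorem3.4,p1} to contradict (i) or (ii) respectively. Your explicit observation that the constraint sum is only $\leqq 0$ (rather than strictly negative) and that the strict negativity of the objective term alone suffices is in fact a slightly cleaner bookkeeping than the paper's deferral to the earlier proof.
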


\begin{proof}
	Let, on contrary, $\bar{p}$ be not a weakly type-I POS of (MIVOP2), then, there exists $\hat{p} \neq \bar{p} \in F$, such that $\phi_s(\hat{p}) <_{LU} \phi_s(\bar{p})$, $s \in \{1,2,...,l\}$. So, we have $\phi_c(\hat{p}) <_{LU} \phi_c(\bar{p})$, which means that $\phi_c^L(\hat{p}) < \phi_c^L(\bar{p})$ or $\phi_c^U(\hat{p}) < \phi_c^U(\bar{p})$.
	
	Since, $\phi_c$ is LU-pseudo-convex function at $\bar{p}$, we have from Definition \ref{definition2.12,p1}(i) that $\phi_c^L$ and $\phi_c^U$ are pseudo-convex functions at $\bar{p}$. From Definition \ref{definition2.11,p1}(i), we have
	$$(\phi_c^L)'(\bar{p}; ~exp^{-1}_{\bar{p}} ~ \hat{p}) < 0, ~~~ \text{or} ~~~ (\phi_c^U)'(\bar{p}; ~exp^{-1}_{\bar{p}} ~ \hat{p}) < 0.$$
	The proof is now analogous to the Theorem \ref{theorem3.4,p1}.
\end{proof}

\subsection{KKT Conditions for Strongly Type-I and Type-II POS}
In this section, we address KKT conditions for strongly type-I and type-II POS of (MIVOP2).

\begin{definition}\label{definition3.1,p1}
	\rm	Suppose that an interval-valued function $\phi : E \rightarrow \mathbb{I}$ with $\phi(p) = [\phi^L(p), \phi^U(p)]$, defined on a nonempty open geodesic convex set $E \subseteq H$ is weakly directionally differentiable at $\bar{p} \in E$. We say that $\phi$ is L-pseudo-convex, (strictly L-pseudo-convex), at $\bar{p} \in E$ if $\phi^L$ is pseudo-convex function, (strictly pseudo-convex function), at $\bar{p}$.
\end{definition}

We remark here that one can similarly define the other concepts of U-pseudo-convexity, C-pseudo-convexity and W-pseudo-convexity involving the respective functions $\phi^U$, $\phi^C$ and $\phi^W$.

\begin{remark}
	\rm It is easy to see that $\phi$ is LU-pseudo-convex function, (CW-pseudo-convex function), at $\bar{p}$ if and only if $\phi$ is L-pseudo-convex function and U-pseudo-convex function, (C-pseudo-convex function and W-pseudo-convex function), at $\bar{p}$.
\end{remark}

\begin{theorem}\label{theorem3.7,p1}
	Let $\bar{p} \in F$ and assume $\psi_u:E\rightarrow \mathbb{R}$, $u \in J(\bar{p})$, of (MIVOP2) be strictly pseudo-convex at $\bar{p}$. If, there exists $c \in \{1,2,...,l\}$, such that, $\phi_c : E \rightarrow \mathbb{I}$ with $\phi_c(p) = [\phi_c^L(p), \phi_c^U(p)]$, defined on an open geodesic convex set $E$, be weakly directionally differentiable at $\bar{p}$;
	\begin{enumerate}[label=\bf(\alph*)]
		\item If $\phi_c$ is strictly L-pseudo-convex, (strictly U-pseudo-convex), at $\bar{p}$ and for every $p\in F$, there exist, $0 \leqq \mu_u \in \mathbb{R}$, $u \in \{1,2,...,t\}$, such that:
		\begin{enumerate}[label=(\roman*)]
			\item $(\phi_c^L)'(\bar{p}; ~exp^{-1}_{\bar{p}} ~ p)$ + $\displaystyle\sum_{u=1}^{t} \mu_u \psi_u'(\bar{p}; ~exp^{-1}_{\bar{p}} ~ p) \geqq 0, $ \\
			$\big((\phi_c^U)'(\bar{p}; ~exp^{-1}_{\bar{p}} ~ p) + \displaystyle\sum_{u=1}^{t} \mu_u \psi_u'(\bar{p}; ~exp^{-1}_{\bar{p}} ~ p) \geqq 0 \big);$\\
			\item $\mu_u\psi_u(\bar{p}) = 0,$ $u \in \{1,2,...,t\}$,
		\end{enumerate}
	\end{enumerate}
	then, $\bar{p}$ is a strongly type-I POS of (MIVOP2).
	\begin{enumerate}[resume*]
		\item If $\phi_c$ is strictly C-pseudo-convex, (strictly W-pseudo-convex), at $\bar{p}$ and, for every $p \in F$, there exist, $0 \leqq \mu_u\in \mathbb{R}$, $u \in \{1,2,...,t\}$, such that:
		\begin{enumerate}[label=(\roman*)]
			\item[(iii)] $(\phi_c^C)'(\bar{p}; ~exp^{-1}_{\bar{p}} ~ p)$ + $\displaystyle\sum_{u=1}^{t} \mu_u \psi_u'(\bar{p}; ~exp^{-1}_{\bar{p}} ~ p) \geqq 0, $\\
			$\big((\phi_c^W)'(\bar{p}; ~exp^{-1}_{\bar{p}} ~ p) + \displaystyle\sum_{u=1}^{t} \mu_u \psi_u'(\bar{p}; ~exp^{-1}_{\bar{p}} ~ p) \geqq 0 \big);$
			\item[(iv)] $\mu_u\psi_u(\bar{p}) = 0$, $u \in \{1,2,...,t\}$,
		\end{enumerate}
	\end{enumerate}
	then, $\bar{p}$ is a strongly type-II POS of (MIVOP2).
\end{theorem}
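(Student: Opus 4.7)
My plan is to argue by contradiction, mirroring the template established for Theorem \ref{theorem3.4,p1} and Theorem \ref{theorem3.6,p1}. Suppose $\bar{p}$ is not a strongly type-I POS of (MIVOP2); then there exists $\hat{p}\ne \bar{p}$ in $F$ with $\phi_s(\hat{p})\leqq_{LU}\phi_s(\bar{p})$ for every $s\in\{1,\ldots,l\}$. In particular, at the distinguished index $c$ we have both $\phi_c^L(\hat{p})\leqq \phi_c^L(\bar{p})$ and $\phi_c^U(\hat{p})\leqq \phi_c^U(\bar{p})$.

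The crucial new ingredient compared with the earlier theorems is Definition \ref{definition3.1,p1}: the hypothesis that $\phi_c$ is strictly L-pseudo-convex (respectively, strictly U-pseudo-convex) at $\bar{p}$ is by definition the statement that $\phi_c^L$ (resp.\ $\phi_c^U$) is itself strictly pseudo-convex at $\bar{p}$. Consequently the single weak inequality $\phi_c^L(\hat{p})\leqq \phi_c^L(\bar{p})$ (resp.\ $\phi_c^U(\hat{p})\leqq \phi_c^U(\bar{p})$) already triggers Definition \ref{definition2.11,p1}(ii) and produces the strict directional inequality $(\phi_c^L)'(\bar{p};exp_{\bar{p}}^{-1}\hat{p})<0$ (resp.\ $(\phi_c^U)'(\bar{p};exp_{\bar{p}}^{-1}\hat{p})<0$). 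This is precisely where the passage from a strongly POS comparison to a strict directional derivative is bridged, whereas in Theorem \ref{theorem3.4,p1} one had to exploit strict inequality in the functional comparison itself to launch the argument.

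From here I would reuse the constraint-side machinery of Theorem \ref{theorem3.4,p1} verbatim. Geodesic convexity of $F$ makes $w=exp_{\bar{p}}^{-1}\hat{p}$ a geodesic feasible direction at $\bar{p}$, so Lemma \ref{lemma3.1,p1}, applied to the strictly pseudo-convex active constraints $\psi_u$ for $u\in J(\bar{p})$, yields $\psi_u'(\bar{p};w)<0$ for every such $u$. Multiplying by $\mu_u\geqq 0$, summing over the active set, and invoking complementarity condition (ii) to discard inactive indices (for which $\mu_u=0$), I then add the strict directional inequality for $\phi_c^L$ (or $\phi_c^U$) to obtain strict negativity in the full KKT sum at $p=\hat{p}$. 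This directly contradicts hypothesis (i).

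Part \textbf{(b)} follows the same template after replacing the LU-decomposition by the CW-decomposition: $\phi_c(\hat{p})\leqq_{CW}\phi_c(\bar{p})$ furnishes $\phi_c^C(\hat{p})\leqq \phi_c^C(\bar{p})$ and $\phi_c^W(\hat{p})\leqq \phi_c^W(\bar{p})$; strict C-pseudo-convexity (respectively, strict W-pseudo-convexity) extracts the required negative directional derivative of the selected component; and the constraint-side argument is unchanged. The only real obstacle is careful bookkeeping of the four subcases (L or U in part \textbf{(a)}, C or W in part \textbf{(b)}); once one observes that strict pseudo-convexity of a \emph{single} scalar component already converts a weak functional comparison into a strict directional-derivative inequality, the rest of the proof is mechanical.
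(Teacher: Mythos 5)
Your proposal is correct and follows essentially the same route as the paper: argue by contradiction from Definition \ref{definition2.5,p1}(ii), use strict L-pseudo-convexity (resp.\ U-, C-, W-) to turn the weak component inequality $\phi_c^L(\hat{p})\leqq\phi_c^L(\bar{p})$ into the strict directional inequality $(\phi_c^L)'(\bar{p};exp_{\bar{p}}^{-1}\hat{p})<0$ via Definition \ref{definition2.11,p1}(ii), and then reuse the constraint-side argument of Theorems \ref{theorem3.4,p1} and \ref{theorem3.6,p1} to contradict condition (i). You correctly identify the one genuinely new ingredient (strict pseudo-convexity of a single scalar component bridging the weak comparison in the strong-POS definition), which is exactly the point the paper's proof turns on.
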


\begin{proof}
	On contrary, let $\bar{p}$ be not a strongly type-I POS of (MIVOP2), then by Definition \ref{definition2.5,p1}(ii), there exists $\hat{p} \neq \bar{p} \in F$, such that, $\phi(\hat{p}) \leqq_{LU} \phi(\bar{p})$, i.e., $\phi_s(\hat{p}) \leqq_{LU} \phi_s(\bar{p})$,  $s \in \{1,2,...,l\}$. Thus, we have $\phi_c(\hat{p}) \leqq_{LU} \phi_c(\bar{p})$ which gives $\phi_c^L(\hat{p}) \leqq \phi_c^L(\bar{p})$, ($\phi_c^U(\hat{p}) \leqq \phi_c^U(\bar{p})$). Since $\phi_c$ is strictly L-pseudo-convex function, (strictly U-pseudo-convex function), at $\bar{p}$ which means that $\phi_c^L$, ($\phi_c^U$), is strictly pseudo-convex function at $\bar{p}$. By Definition \ref{definition2.11,p1}(ii), we now have
	$$(\phi_c^L)'(\bar{p}; ~ exp^{-1}_{\bar{p}}~\hat{p}) < 0, ~~ ((\phi_c^U)'(\bar{p}; ~ exp^{-1}_{\bar{p}}~\hat{p}) < 0).$$
	The proof is now analogous to Theorem \ref{theorem3.6,p1}.
\end{proof}

\begin{definition}\label{definition3.2,p1}
	\rm A function $\phi : E \rightarrow \mathbb{I}$ with $\phi(p) = [\phi^L(p), \phi^U(p)]$, defined on a nonempty geodesic convex set $E \subseteq H$, is L-convex at $\bar{p} \in E$ if and only if $\phi^L$ is a convex function at $\bar{p}$.
\end{definition}

We remark here that one can similarly define the other concepts of U-convexity, C-convexity and W-convexity involving the respective functions $\phi^U$, $\phi^C$ and $\phi^W$.

\begin{remark}
	\rm It is easy to see that $\phi$ is LU-convex function, (CW-convex function), at $\bar{p}$ if and only if $\phi$ is L-convex function and U-convex function, (C-convex function and W-convex function), at $\bar{p}$.
\end{remark}

\begin{theorem}\label{theorem3.8,p1}
	Let $\bar{p} \in F$ and assume $\psi_u:E\rightarrow \mathbb{R}$, $u \in J(\bar{p})$, of (MIVOP2) be strictly pseudo-convex at $\bar{p}$. If, there exists $c \in \{1,2,...,l\}$, such that, $\phi_c : E \rightarrow \mathbb{I}$ with $\phi_c(p) = [\phi_c^L(p), \phi_c^U(p)]$, defined on an open geodesic convex set $E$, be weakly directionally differentiable at $\bar{p}$ such that
	\begin{enumerate}[label=(\roman*)]
		\item $(\phi_c^L)'(\bar{p}; ~exp^{-1}_{\bar{p}} ~ p) < (\phi_c^U)'(\bar{p}; ~exp^{-1}_{\bar{p}} ~ p)$;\\
		
		$\big((\phi_c^U)'(\bar{p}; ~exp^{-1}_{\bar{p}} ~ p)< (\phi_c^L)'(\bar{p}; ~exp^{-1}_{\bar{p}} ~ p)\big),$
	\end{enumerate}
	%		\begin{enumerate}[label=(\roman*)]
		%			\item $\phi_c'(\bar{p}; ~exp^{-1}_{\bar{p}} ~ p) = \big[(\phi_c^L)'(\bar{p}; ~exp^{-1}_{\bar{p}} ~ p), ~ (\phi_c^U)'(\bar{p}; ~exp^{-1}_{\bar{p}} ~ p)\big],$\\\\
		%			$\big(\text{respectively,~} \phi_c'(\bar{p}; ~exp^{-1}_{\bar{p}} ~ p) = \big[(\phi_c^U)'(\bar{p}; ~exp^{-1}_{\bar{p}} ~ p), ~ (\phi_c^L)'(\bar{p}; ~exp^{-1}_{\bar{p}} ~ p)\big] \big);$
		%			\item $(\phi_c^L)'(\bar{p}; ~exp^{-1}_{\bar{p}} ~ p) \neq (\phi_c^U)'(\bar{p}; ~exp^{-1}_{\bar{p}} ~ p),$
		%		\end{enumerate}
	then the following arguments hold true;
	\begin{enumerate}[label=\bf (\alph*)]
		\item If $\phi_c$ is U-convex function, (L-convex function), at $\bar{p}$ and for every $p\in F$, there exist, $0 \leqq \mu_u\in \mathbb{R}$, $u \in \{1,2,...,t\}$, such that:
		\begin{enumerate}[label=(\roman*)]    
			\item[(ii)] $(\phi_c^L)'(\bar{p}; ~exp^{-1}_{\bar{p}} ~ p)$ + $\displaystyle\sum_{u=1}^{t} \mu_u \psi_u'(\bar{p}; ~exp^{-1}_{\bar{p}} ~ p) \geqq 0, $\\ $\big((\phi_c^U)'(\bar{p}; ~exp^{-1}_{\bar{p}} ~ p) + \displaystyle\sum_{u=1}^{t} \mu_u \psi_u'(\bar{p}; ~exp^{-1}_{\bar{p}} ~ p) \geqq 0 \big);$
			\item[(iii)] $\mu_u\psi_u(\bar{p}) = 0,$ $u \in \{1,2,...,t\}$,
		\end{enumerate}
	\end{enumerate}
	then, $\bar{p}$ is a strongly type-I POS of (MIVOP2).
	\begin{enumerate}[resume*]
		\item If $\phi_c$ is C-convex function, (W-convex function), at $\bar{p}$ and, for every $p\in F$, there exist, $0 \leqq \mu_u\in \mathbb{R}$, $u \in \{1,2,...,t\}$, such that:
		\begin{enumerate}[label=(\roman*)]  
			\item[(iv)] $(\phi_c^L)'(\bar{p}; ~exp^{-1}_{\bar{p}} ~ p) >0,$ $\big((\phi_c^U)'(\bar{p}; ~exp^{-1}_{\bar{p}} ~ p) < 0\big);$
			\item[(v)] $(\phi_c^C)'(\bar{p}; ~exp^{-1}_{\bar{p}} ~ p) + \displaystyle\sum_{u=1}^{t} \mu_u \psi_u'(\bar{p}; ~exp^{-1}_{\bar{p}} ~ p) \geqq 0,$\\
			$\big((\phi_c^W)'(\bar{p}; ~exp^{-1}_{\bar{p}} ~ p) + \displaystyle\sum_{u=1}^{t} \mu_u \psi_u'(\bar{p}; ~exp^{-1}_{\bar{p}} ~ p) \geqq 0 \big);$
			\item[(vi)] $\mu_u\psi_u(\bar{p}) = 0$, $u \in \{1,2,...,t\}$,
		\end{enumerate}
	\end{enumerate}
	then, $\bar{p}$ is a strongly type-II POS of (MIVOP2).	
\end{theorem}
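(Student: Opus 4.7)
The plan is to argue by contradiction, following the templates of Theorems \ref{theorem3.4,p1}, \ref{theorem3.6,p1}, and \ref{theorem3.7,p1}. For part \textbf{(a)}, I would first suppose that $\bar{p}$ is not a strongly type-I POS of (MIVOP2); Definition \ref{definition2.5,p1}(ii) then produces some $\hat{p} \in F$ with $\hat{p} \neq \bar{p}$ and $\phi(\hat{p}) \leqq_{LU} \phi(\bar{p})$, so in particular $\phi_c^L(\hat{p}) \leqq \phi_c^L(\bar{p})$ and $\phi_c^U(\hat{p}) \leqq \phi_c^U(\bar{p})$. Set $w = exp^{-1}_{\bar{p}} \hat{p}$. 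In the non-parenthetical U-convex sub-case, Definition \ref{definition3.2,p1} makes $\phi_c^U$ convex at $\bar{p}$, so Remark \ref{remark2.1,p1}(ii) delivers $(\phi_c^U)'(\bar{p}; w) \leqq \phi_c^U(\hat{p}) - \phi_c^U(\bar{p}) \leqq 0$; hypothesis (i) then sharpens this to $(\phi_c^L)'(\bar{p}; w) < (\phi_c^U)'(\bar{p}; w) \leqq 0$.

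Because $E$ is geodesic convex with $\hat{p} \in F \subseteq E$, the vector $w$ lies in $A_{\bar{p}} E$, so Lemma \ref{lemma3.1,p1}, whose hypotheses hold by the assumed strict pseudo-convexity of the active $\psi_u$, furnishes $\psi_u'(\bar{p}; w) < 0$ for every $u \in J(\bar{p})$. Complementary slackness (iii) forces $\mu_u = 0$ for $u \notin J(\bar{p})$, hence $\sum_{u=1}^{t} \mu_u \psi_u'(\bar{p}; w) \leqq 0$. Adding this to the strict bound on $(\phi_c^L)'(\bar{p}; w)$ produces $(\phi_c^L)'(\bar{p}; w) + \sum_{u=1}^{t} \mu_u \psi_u'(\bar{p}; w) < 0$ at $p = \hat{p} \in F$, contradicting condition (ii). The parenthetical L-convex sub-case is entirely symmetric, with the roles of $\phi_c^L$ and $\phi_c^U$ swapped throughout.

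For part \textbf{(b)}, I would route the same scheme through the center/width components of $\phi_c$. Supposing $\bar{p}$ is not a strongly type-II POS yields $\hat{p} \neq \bar{p}$ in $F$ with $\phi_c^C(\hat{p}) \leqq \phi_c^C(\bar{p})$ and $\phi_c^W(\hat{p}) \leqq \phi_c^W(\bar{p})$. In the C-convex sub-case, Remark \ref{remark2.1,p1}(ii) gives $(\phi_c^C)'(\bar{p}; w) \leqq 0$, while (iv) and (i) together force $0 < (\phi_c^L)'(\bar{p}; w) < (\phi_c^U)'(\bar{p}; w)$; since $(\phi_c^C)'(\bar{p}; w) = \tfrac{1}{2}\{(\phi_c^L)'(\bar{p}; w) + (\phi_c^U)'(\bar{p}; w)\}$, this yields $(\phi_c^C)'(\bar{p}; w) > 0$, an immediate contradiction which, coupled with $\sum_{u} \mu_u \psi_u'(\bar{p}; w) \leqq 0$ obtained from Lemma \ref{lemma3.1,p1} and (vi), also violates (v). In the W-convex sub-case, the reversed (i) alone forces $(\phi_c^W)'(\bar{p}; w) = \tfrac{1}{2}\{(\phi_c^U)'(\bar{p}; w) - (\phi_c^L)'(\bar{p}; w)\} < 0$; combined with $\sum_{u=1}^{t} \mu_u \psi_u'(\bar{p}; w) \leqq 0$ obtained exactly as in part \textbf{(a)}, the parenthetical version of (v) is violated.

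The main obstacle is the bookkeeping in part \textbf{(b)}: hypothesis (i) only constrains the gap between $(\phi_c^L)'$ and $(\phi_c^U)'$, whereas the convexity hypothesis only controls one of $\phi_c^C$ or $\phi_c^W$. The linear identities $\phi_c^C = \tfrac{1}{2}(\phi_c^L + \phi_c^U)$ and $\phi_c^W = \tfrac{1}{2}(\phi_c^U - \phi_c^L)$, together with the sign datum from (iv), are what reconcile the two families of derivatives, and one must dispatch the correct combination in each sub-case so that the resulting strict negativity at $\hat{p}$ lines up with the KKT-type inequality one wishes to contradict.
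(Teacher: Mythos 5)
Your argument for part \textbf{(a)} is exactly the paper's: negate strong type-I Pareto optimality to obtain $\hat{p}$ with $\phi_c^U(\hat{p})\leqq\phi_c^U(\bar{p})$, use U-convexity and Remark \ref{remark2.1,p1}(ii) to get $(\phi_c^U)'(\bar{p};exp^{-1}_{\bar{p}}\hat{p})\leqq 0$, sharpen to $(\phi_c^L)'(\bar{p};exp^{-1}_{\bar{p}}\hat{p})<0$ via hypothesis (i), and then run the Lemma \ref{lemma3.1,p1}/complementary-slackness argument of Theorem \ref{theorem3.4,p1} to contradict condition (ii); the parenthetical case is the same with $L$ and $U$ swapped. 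For part \textbf{(b)} you are slightly more explicit, and in a useful way: the paper only records the interval inequality $a^W<a^C$ (valid when $0<a^L<a^U$) to conclude $(\phi_c^W)'<(\phi_c^C)'$ and then appeals to ``similarly as (a)'', which most naturally yields strict negativity of $(\phi_c^W)'$ and hence contradicts the \emph{parenthetical} form of (v) rather than the C-convex form actually paired with it in the statement. Your route closes this cleanly: from (iv) and (i) you get $0<(\phi_c^L)'<(\phi_c^U)'$, hence $(\phi_c^C)'=\tfrac12\{(\phi_c^L)'+(\phi_c^U)'\}>0$, which directly clashes with the bound $(\phi_c^C)'(\bar{p};exp^{-1}_{\bar{p}}\hat{p})\leqq 0$ coming from C-convexity; and in the W-case the reversed (i) alone gives $(\phi_c^W)'<0$, violating the parenthetical (v) once $\sum_u\mu_u\psi_u'(\bar{p};exp^{-1}_{\bar{p}}\hat{p})\leqq 0$ is in hand. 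Both treatments rest on the same linearity identities for $(\phi^C)'$ and $(\phi^W)'$, so this is essentially the paper's proof with the bookkeeping in part \textbf{(b)} done more carefully; no gap.
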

\begin{proof}
	Let, on contrary, $\bar{p}$ be not a strongly type-I POS of (MIVOP2), then, there exists $\hat{p} \neq \bar{p} \in F$, such that, $\phi(\hat{p}) \leqq_{LU} \phi(\bar{p})$, i.e., $\phi_s(\hat{p}) \leqq_{LU} \phi_s(\bar{p})$, $s=1,2,...,l$. So, $\phi_c(\hat{p}) \leqq_{LU} \phi_c(\bar{p})$, which implies $\phi_c^U(\hat{p}) \leqq_{LU} \phi_c^U(\bar{p})$. Since, $\phi_c(p)$ is U-convex function, from Definition \ref{definition3.2,p1}, $\phi_c^U(p)$ is convex function. Hence, we have from Remark \ref{remark2.1,p1}(ii), that,\\
	\begin{equation}\label{equation14,p1}
		(\phi_c^U)'(\bar{p};~ exp^{-1}_{\bar{p}} ~ \hat{p}) \leqq 0.
	\end{equation}
	Also, we are given that\\
	\begin{equation}\label{equation15,p1}
		(\phi_c^L)'(\bar{p};~ exp^{-1}_{\bar{p}} ~ \hat{p}) < (\phi_c^U)'(\bar{p};~ exp^{-1}_{\bar{p}} ~ \hat{p}).
	\end{equation}
	From inequations (\ref{equation14,p1}) and (\ref{equation15,p1}), we have \\
	$$(\phi_c^L)'(\bar{p};~ exp^{-1}_{\bar{p}} ~ \hat{p}) <0.$$
	Part {\bf (a)} can now be obtained from the similar arguments of theorem \ref{theorem3.6,p1}.\\
	For the part {\bf(b)}, we first note that, for an interval $A=[a^L,a^U]$ with $0<a^L<a^U$, we have\\
	$$\frac{a^U-a^L}{2}<\frac{a^L+a^U}{2}$$
	$$\text{i.e.,}  ~~~~~~ a^W<a^C~~~~~~~~~~~$$
	As a consequence, it is straight forward to see, from conditions (i) and (iv), that\\
	$~~~~~~~~~~~~~~~~~~~~(\phi_c^W)'(\bar{p}; ~exp^{-1}_{\bar{p}} ~ p) < (\phi_c^C)'(\bar{p}; ~exp^{-1}_{\bar{p}} ~ p),$\\ $~~~~~~~~~~\big(\text{respectively,~} (\phi_c^C)'(\bar{p}; ~exp^{-1}_{\bar{p}} ~ p) < (\phi_c^W)'(\bar{p}; ~exp^{-1}_{\bar{p}} ~ p)\big).$\\
	The result can now be obtained similarly as that of {\bf(a)}.     
\end{proof}

\section{KKT Conditions for Generalized Hukuhara Difference Case}\label{section4,p1}
In this section, we will address KKT conditions using gH-difference. 

\begin{theorem}\label{theorem4.1,p1}
	Let $\psi_u: E \rightarrow \mathbb{R}$, $u\in \{1,2,...,t\}$, be convex at $\bar{p} \in F$ and let $\phi_s: E \rightarrow \mathbb{I}$ with $\phi_s(p) = [\phi_s^L(p), \phi_s^U(p)]$, $s\in \{1,2,...,l\}$, be LU-convex and weakly directionally differentiable at $\bar{p} \in E$. Suppose, there exist, $0 < \lambda_s \in \mathbb{R}$, $s\in \{1,2,...,l\}$, and $0 \leqq \mu_u \in \mathbb{R}$, $u \in \{1,2,...,t\}$, such that:
	\begin{enumerate}[label=(\roman*)]
		\item -$\big($$\displaystyle \sum_{s=1}^{l} \lambda_s \phi'_s(\bar{p};~exp^{-1}_{\bar{p}} ~p)\big) \ominus_g \displaystyle\sum_{u=1}^{t} \mu_u \psi_u'(\bar{p};~exp^{-1}_{\bar{p}} ~p) \leqq_{LU} [0,0];$
		\item $\mu_u\psi_u(\bar{p}) = 0$, $u \in \{1,2,...,t\}$,
	\end{enumerate}
	then, $\bar{p}$ is a type-I POS of (MIVOP2).
\end{theorem}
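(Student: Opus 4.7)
The plan is a contradiction argument that parallels the proof of Theorem~\ref{theorem3.2,p1}(a) but dispenses with the two-multiplier trick and instead decodes the gH-difference directly. Suppose $\bar{p}$ is not a type-I POS of (MIVOP2). Then there exist $\hat{p}\in F$ and an index $c\in\{1,\dots,l\}$ with $\phi_c(\hat{p})<_{LU}\phi_c(\bar{p})$ and $\phi_s(\hat{p})\leqq_{LU}\phi_s(\bar{p})$ for $s\neq c$, exactly as in (\ref{equation5,p1})--(\ref{equation6,p1}).

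Setting $w=exp^{-1}_{\bar{p}}\hat{p}$, I would use Lemma~\ref{lemma2.2,p1} (which turns LU-convexity of $\phi_s$ into convexity of $\phi_s^L,\phi_s^U$) together with Remark~\ref{remark2.1,p1}(ii) to conclude $(\phi_s^L)'(\bar{p};w)\leqq\phi_s^L(\hat{p})-\phi_s^L(\bar{p})\leqq 0$ and $(\phi_s^U)'(\bar{p};w)\leqq\phi_s^U(\hat{p})-\phi_s^U(\bar{p})\leqq 0$ for every $s$, with at least one of the right-hand sides strictly negative when $s=c$. Hence $\min\{(\phi_c^L)'(\bar{p};w),(\phi_c^U)'(\bar{p};w)\}<0$. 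Lemma~\ref{lemma2.5,p1} identifies $\phi_s'(\bar{p};w)$ with the interval whose endpoints are $\min\{(\phi_s^L)'(\bar{p};w),(\phi_s^U)'(\bar{p};w)\}$ and $\max\{(\phi_s^L)'(\bar{p};w),(\phi_s^U)'(\bar{p};w)\}$. Writing
\[
m=\sum_{s=1}^{l}\lambda_s\min\{(\phi_s^L)'(\bar{p};w),(\phi_s^U)'(\bar{p};w)\},\quad M=\sum_{s=1}^{l}\lambda_s\max\{(\phi_s^L)'(\bar{p};w),(\phi_s^U)'(\bar{p};w)\},
\]
the positivity of the $\lambda_s$ gives $\sum_s\lambda_s\phi_s'(\bar{p};w)=[m,M]$ with $m<0$ and $M\leqq 0$. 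Using convexity of each $\psi_u$, feasibility $\psi_u(\hat{p})\leqq 0$, and the complementarity $\mu_u\psi_u(\bar{p})=0$ (so whenever $\mu_u>0$ we have $\psi_u(\bar{p})=0$ and hence $\psi_u'(\bar{p};w)\leqq\psi_u(\hat{p})\leqq 0$), the scalar $b:=\sum_{u=1}^{t}\mu_u\psi_u'(\bar{p};w)$ satisfies $b\leqq 0$.

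The final step is to unwind the gH-difference. Since $-\sum_s\lambda_s\phi_s'(\bar{p};w)=[-M,-m]$ with $-M\leqq -m$ and the scalar $b$ is the degenerate interval $[b,b]$, the formula $T\ominus_g S=[\min\{t^L-s^L,t^U-s^U\},\max\{t^L-s^L,t^U-s^U\}]$ gives
\[
-\Big(\sum_{s=1}^{l}\lambda_s\phi_s'(\bar{p};w)\Big)\ominus_g\sum_{u=1}^{t}\mu_u\psi_u'(\bar{p};w)=[-M-b,\,-m-b].
\]
Hypothesis (i) at $p=\hat{p}$ therefore forces $-m-b\leqq 0$, i.e.\ $m+b\geqq 0$, contradicting the already established $m<0$ and $b\leqq 0$. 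Hence $\bar{p}$ must be a type-I POS. The main obstacle I anticipate is purely bookkeeping: keeping track of how negation and $\ominus_g$ reorder the endpoints of the interval $\sum_s\lambda_s\phi_s'(\bar{p};w)$ so that the LU-comparison with $[0,0]$ reduces exactly to the scalar inequality $m+b\geqq 0$, which is the one killed by convexity and complementarity.
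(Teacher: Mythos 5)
Your proof is correct, but it follows a genuinely different route from the paper's. The paper decodes condition (i) into the two scalar endpoint inequalities
$\sum_s \lambda_s \min\{(\phi_s^L)'(\bar p;w),(\phi_s^U)'(\bar p;w)\} + \sum_u \mu_u \psi_u'(\bar p;w) \geqq 0$ and the analogous max-inequality, \emph{adds} them to recover the hypothesis of Theorem~\ref{theorem3.2,p1}(a) (with $\lambda_s^L=\lambda_s^U=\lambda_s$ and $\mu_u'=2\mu_u$), and then simply invokes that theorem; the whole proof is a two-line reduction. You instead unwind the gH-difference the same way but then run a self-contained contradiction argument: the gradient inequality of Remark~\ref{remark2.1,p1}(ii) applied to $\phi_s^L,\phi_s^U$ at the hypothetical dominating point $\hat p$ gives $m<0$ and $M\leqq 0$, complementary slackness plus convexity of the active $\psi_u$ gives $b\leqq 0$, and the lower endpoint of the LU-inequality in (i) forces $m+b\geqq 0$. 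This bypasses both Theorem~\ref{theorem3.2,p1}(a) and the cited scalar sufficiency result (Theorem~\ref{theorem3.1,p1}) on which it rests, and in fact uses only half of condition (i) (the $-m-b\leqq 0$ endpoint), which makes explicit that the max-inequality is not needed for the conclusion. The trade-off is length versus transparency: the paper's reduction is shorter given the earlier machinery, while yours is independent of it and shows precisely where convexity, complementarity, and the positivity of the $\lambda_s$ enter. All the individual steps check out, including the endpoint bookkeeping $-[m,M]\ominus_g[b,b]=[-M-b,-m-b]$ and the observation that strictness at the index $c$ propagates to $\min\{(\phi_c^L)'(\bar p;w),(\phi_c^U)'(\bar p;w)\}<0$.
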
 

\begin{proof}
	The condition (i) yields
	\begin{equation}\label{equation16,p1}
		\displaystyle \sum_{s=1}^{l} \lambda_s \text{min}\{(\phi^L_s)'(\bar{p};~exp^{-1}_{\bar{p}} ~p),~(\phi^U_s)'(\bar{p};~exp^{-1}_{\bar{p}} ~p)\} + \displaystyle\sum_{u=1}^{t} \mu_u \psi_u'(\bar{p};~exp^{-1}_{\bar{p}} ~p) \geqq 0 \hspace*{0cm}
	\end{equation}
	\begin{equation}\label{equation17,p1}
		\displaystyle \sum_{s=1}^{l} \lambda_s \text{max}\{(\phi^L_s)'(\bar{p};~exp^{-1}_{\bar{p}} ~p),~(\phi^U_s)'(\bar{p};~exp^{-1}_{\bar{p}} ~p)\} + \displaystyle\sum_{u=1}^{t} \mu_u \psi_u'(\bar{p};~exp^{-1}_{\bar{p}} ~p) \geqq 0. \hspace*{0cm}
	\end{equation}
	Adding inequalities (\ref{equation16,p1}) and (\ref{equation17,p1}), we have
	\begin{equation}\label{equation18,p1}
		\begin{split}
			\displaystyle \sum_{s=1}^{l} \lambda_s (\phi^L_s)'(\bar{p};~exp^{-1}_{\bar{p}} ~p) &+ \displaystyle \sum_{s=1}^{l} \lambda_s (\phi^U_s)'(\bar{p};~exp^{-1}_{\bar{p}} ~p)\\ 
			&+ \displaystyle\sum_{u=1}^{t} \mu_u' \psi_u'(\bar{p};~exp^{-1}_{\bar{p}} ~p) \geqq 0,~~~~~~~~~~~~
		\end{split}
	\end{equation}
	$\text{where~~~~}\mu_u' = 2\mu_u$, ~~ $u \in \{1,2,...,t\}.$\\ 
	Now, letting $\lambda_s = \lambda_s^L = \lambda_s^U$ and assuming $\mu_u = \mu_u'$ in Theorem \ref{theorem3.2,p1}(a), the result follows immediately.
\end{proof}

\begin{remark}
	\rm	The KKT conditions for gH-difference of the remaining results described in prequel are analogous to Theorem \ref{theorem4.1,p1}.
\end{remark}

\begin{example}
	{\rm \bfseries Manifold (or Cone) of symmetric positive definite matrices: }
	\rm The collection, $S^n_{++}$, of $n \times n$ symmetric positive definite matrices with real entries forms a Hadamard manifold with Riemannian metric:
	$$g_P(X,Y) = Tr(P^{-1}XP^{-1}Y), ~~~ \forall~ P \in S^n_{++}, ~~~ X,Y \in T_P(S^n_{++}).$$
	The minimal geodesic joining $P, Q \in S^n_{++}$ is given by 
	$$\gamma(t) = P^{\frac{1}{2}}(P^{-\frac{1}{2}}QP^{-\frac{1}{2}})^tP^{\frac{1}{2}}, ~~~ \forall ~ t \in [0,1].$$
	For more details, one can refer to \cite{bacak}.
	
	We consider $S^2_{++}$ as the Hadamard manifold	and let $P=I_2$ be a $2 \times 2$ identity matrix and $Q \in S^2_{++}$ be any arbitrary $2 \times 2$ matrix. Then the geodesic emanating from $P=I_2$ in the direction $W = exp^{-1}_P~Q \in T_P(S_{++}^2)$ is given by
	$$\gamma(t) = I_2^{\frac{1}{2}}(I_2^{-\frac{1}{2}}QI_2^{-\frac{1}{2}})^tI_2^{\frac{1}{2}} = Q^t,~~ \text{for all}~ t \in [0,1].$$
	Let $\phi_s: S^2_{++} \rightarrow \mathbb{I}$, $s \in \{1,2\}$, be interval-valued functions and $\psi_u: S^2_{++} \rightarrow \mathbb{R}$, $u \in \{1,2,3\}$, be the real valued constraint functions for the following.
	\begin{align*}
		\text{(MIVOP5)\hspace{0.5cm}Minimize~~~~~} \phi(Q) &= (\phi_1(Q),~ \phi_2(Q)),\\
		\text{where }~~~ \phi_1(Q) &= \big[ln(det(Q)),~ ln(det(Q)) +1\big],\\
		\text{and}~~~~ \phi_2(Q) &= \big[(ln(det(Q)))^2,~ (ln(det(Q)))^2 +1\big],\\
		\text{subject to,~~~} \psi_1(Q) &= \frac{1}{(1 + ln(det(Q)))^2} - 1 \leqq 0,\\
		\psi_2(Q) &= -ln(det(Q)) - 3\leqq 0,\\
		\psi_3(Q)&= \sqrt{3 + (ln(det(Q)))^2} - \sqrt{3}\leqq 0.
	\end{align*}
	Here, both the objective functions $\phi_1(Q)$ and $\phi_2(Q)$ are LU-convex at $P=I_2$ and the constraints $\psi_u(Q)$, $u \in \{1,2,3\}$ are also convex functions at $P=I_2$.
	
	The gH-directional derivatives of $\phi_1(Q)$ and $\phi_2(Q)$ at $P=I_2$ in any direction $w= exp^{-1}_P ~Q \in T_P(S^2_{++})$ are as follows
	\begin{align*}
		(\phi_1)'(P; ~ exp_P^{-1} ~Q) &=\big[ln(det(Q)),~ ln(det(Q))\big] ,\\
		(\phi_2)'(P; ~ exp_P^{-1} ~Q) &= [0,0].
	\end{align*}
	The directional derivatives of the constraint functions involved in the optimization problem at $P=I_2$ in any direction $w= exp^{-1}_P ~Q \in T_P(S^2_{++})$, are given as follows
	\begin{align*}
		\psi_1'(P; ~ exp_P^{-1} ~Q) &= -2(ln(det(Q))),\\
		\psi_2'(P; ~ exp_P^{-1} ~Q) &= -ln(det(Q)),\\
		\psi_3'(P; ~ exp_P^{-1} ~Q) &= 0.
	\end{align*}
	It is easy to check that conditions (i) and (ii) in Theorem \ref{theorem4.1,p1} hold at $P=I_2$ with $(\lambda_1, \lambda_2, \mu_1, \mu_2, \mu_3)= (2,1,1,0,1)$. So, by \ref{theorem4.1,p1}, we conclude that $P=I_2$ is a type-I POS of (MIVOP5).
\end{example}

\section{Comparison between proposed KKT conditions and the existing KKT conditions for MIVOP}
In this section, we make comparison between the proposed KKT optimality conditions and the existing ones for interval-valued optimization problems. The comparison is based on the following facts.
\begin{enumerate}[label=(\roman*)]
	\item There exist interval-valued functions which fail to be LU-convex in an Euclidean space but turn out to be LU-convex in a Riemannian manifold under a suitable Riemannian metric, see Example \ref{example3.1,p1}. Hence, the KKT optimality conditions presented in (\cite{ycc}, \cite{eh},\cite{Wu}, \cite{Wu1}, \cite{jz}) do not apply to the optimization programming problems which involve such functions.
	
	\item The KKT optimality conditions presented in (\cite{eh, Wu, Wu1, jz}) use the idea of H-difference of two intervals and H-derivative of an interval-valued function. It is well known that the H-difference of two intervals doesn't always exist but gH-difference always exists. Also, there exist interval-valued functions for which gH-directional derivative exists but may fail to have H-derivative. For instance, the function $\phi: \mathbb{R} \rightarrow \mathbb{I}$ defined by $\phi(p) = (1-p^3)[-1, 4]$ fails to have H-derivative at $P=0$ but the gH-directional derivative of $\phi(p)$ at $p=0$ in the direction $w=1$ is $\phi'(p;~w)=[0,0]$. However, the proposed KKT conditions presented in Section \ref{section4,p1} use the idea of gH-directional derivative which is more general than the H-derivative.
	
	\item The KKT condition used in (\cite{ycc}, \cite{eh},\cite{Wu}, \cite{Wu1}, \cite{jz}) is of the form
	$$\displaystyle \sum_{s=1}^{l} \lambda_s^L \nabla\phi_s^L(\bar{p}) + \displaystyle \sum_{s=1}^{l} \lambda_s^U \nabla\phi_s^U(\bar{p})
	+ \displaystyle\sum_{u=1}^{t} \mu_u \nabla\psi_u(\bar{p}) = 0,$$
	which is very restrictive as the expression in Left hand side involves gradient of objective functions and constraints. Also, the expression in the left hand side needs to be exactly zero. However, the proposed KKT condition in this paper is very mild which involves the one sided directional derivative of the objective functions and constraints. Also, the proposed KKT condition is of the form
	\begin{align*}
		\displaystyle \sum_{s=1}^{l} \lambda_s^L (\phi_s^L)'(\bar{p};~ exp^{-1}_{\bar{p}} ~ p) &+ \displaystyle \sum_{s=1}^{l} \lambda_s^U (\phi_s^U)'(\bar{p};~ exp^{-1}_{\bar{p}} ~ p)\\ 
		&+ \displaystyle\sum_{u=1}^{t} \mu_u \psi_u'(\bar{p};~ exp^{-1}_{\bar{p}} ~ p) \geqq 0,
	\end{align*}  
	which is less restrictive as the expression in the left hand side no longer needs to be exactly zero.
\end{enumerate}

\section{Relation to Fuzzy Optimization Programming}
The relationship between fuzzy set theory and interval analysis is well known. Lodwick \cite{lod1} studied the interconnections between interval analysis, fuzzy interval analysis, and interval and fuzzy optimization. The fuzzy interval analysis is connected with the real arithmetics via constraint interval arithmetics on alpha-cuts and arithmetics on intervals of gradual numbers. Lodwick \cite{lod2} studies the connection between the two methods. One can extend the important results developed in interval analysis into the fuzzy interval analysis using these two tools.

Moreover, the ordering relation, solution concepts and the results presented herein under gH-differentiability can be extended to the space of fuzzy optimization as done by Wu\cite{wu3}. Although, gH-difference between two intervals always exists but the same may fail to exist between two fuzzy intervals. Stefananini \cite{stef4} proposed g-difference to overcome this shortcoming for fuzzy intervals. As g-difference between two fuzzy intervals always exists, one can use this concept to extend the results presented in this paper into fuzzy intervals and fuzzy interval-valued optimization programming. Recently, Huidobro\cite{hui5} have extended the concepts of convexity for interval-valued fuzzy sets and studied the preservation under the intersection and the cutworthy property, it is future work to generalize these notions of convexity to obtain the KKT optimality conditions, as presented in this paper, into the space of interval-valued fuzzy optimization programming. In this sense, this research is a step forward to obtain KKT optimality coditions for multi-objective interval-valued fuzzy optimization programming.

\section{Conclusion}
On Hadamard manifolds, we have successfully obtained the Karush-Kuhn-Tucker optimality conditions for multi-objective interval-valued optimization problems, and we have offered numerical examples to demonstrate the major conclusions. It would be interesting to foresee the idea of using these results to generate algorithms for obtaining Pareto optimal points, as well as to examine certain practical issues such as portfolio selection problem, hub network designing, supplier selection and many more. These results could also be extended to more general spaces, such as Riemannian manifolds. However, because the constraint functions involved in investigating the Pareto optimal solution of multi-objective interval-valued optimization problems are real valued, these ideas could be extended in the future by taking into account interval-valued constraint functions.

%\bibliographystyle{elsarticle-num} 
%\bibliography{bibliography.bib}

\end{document}